%% file: paper1.tex
\documentclass{elsarticle}

\usepackage[utf8]{inputenc}
\usepackage{amsmath}
\usepackage{amssymb}
\usepackage{amsfonts}
\usepackage{amsthm}
\usepackage{enumitem}
\usepackage{geometry}
\usepackage{hyperref}
\usepackage{natbib}
\usepackage{subcaption}
\usepackage{booktabs}
\usepackage{longtable}
\usepackage{siunitx}
\usepackage{color}
\usepackage[normalem]{ulem}
\usepackage{graphicx}
\usepackage{tikz}
\usetikzlibrary{patterns}
\usepackage{multirow}

\newif\iftracked
\ifdefined\TRACKED\trackedtrue\else\trackedfalse\fi
\newcommand{\revision}[2]{\iftracked\par\noindent{\color{red}\begingroup\def\cite##1{[cite]}\def\eqref##1{[ref]}\sout{#1}\endgroup}\par\smallskip\noindent{\color{blue}#2}\par\smallskip\else#2\fi}

  \newtheorem{theorem}{Theorem}
  \newtheorem{lemma}[theorem]{Lemma}
  \newtheorem{proposition}[theorem]{Proposition}
  \newtheorem{corollary}[theorem]{Corollary}
  \newtheorem{assumption}[theorem]{Assumption}
  
  \newtheorem{remark}{Remark}

\newcommand{\R}{\mathbb{R}}
\newcommand{\eps}{\epsilon}

\newcommand{\OmM}{\Omega_m}
\newcommand{\norm}[1]{\|#1\|}
\newcommand{\vertiii}[1]{\left|\!\left|\!\left| #1 \right|\!\right|\!\right|}
\newcommand{\fint}{\mathop{\int}\limits}

\begin{document}

\makeatletter
\def\ps@pprintTitle{%
    \let\@oddhead\@empty
    \let\@evenhead\@empty
    \let\@oddfoot\@empty
    \let\@evenfoot\@empty}
\makeatother

\begin{frontmatter}
    \title{Uniform Inf-Sup Norm Equivalence and Robust Operator Preconditioning for Stokes Flow in tight domains with Periodic Pillars}
    \author[inst1,inst2]{Qi Xin}
    \ead{qixin1@link.cuhk.edu.cn}

    \author[inst2]{Yan Xie}
    \ead{xieyan@sribd.cn}

    \author[inst5]{Chen-song Zhang}
    \ead{zhangcs@lsec.cc.ac.cn}

    \author[inst1,inst2,inst6]{Shihua Gong\corref{cor1}}
    \ead{gongshihua@cuhk.edu.cn}

    \author[inst4]{Jinchao Xu}
    \ead{jinchao.xu@kaust.edu.sa}

    \cortext[cor1]{Corresponding author}

    \affiliation[inst1]{organization={School of Science and Engineering, The Chinese University of Hong Kong},
        addressline={Shenzhen},
        postcode={518172},
        state={Guangdong},
        country={China}}

    \affiliation[inst2]{organization={Shenzhen International Center for Industrial and Applied Mathematics, Shenzhen Research Institute of Big Data},
        addressline={Shenzhen},
        postcode={518172},
        state={Guangdong},
        country={China}}

    \affiliation[inst5]{organization={SKLMS, Academy of Mathematics and Systems Science, Chinese Academy of Sciences; School of Mathematical Sciences, University of Chinese Academy of Sciences},
        city={Beijing},
        postcode={100049},
        country={China}}
    
    \affiliation[inst4]{organization={Applied Mathematics and Computational Sciences, CEMSE Division, King Abdullah University of Science and Technology},
        city={Thuwal},
        postcode={23955},
        country={Saudi Arabia}}

    \affiliation[inst6]{organization={Shenzhen Loop Area Institute},
        addressline={Shenzhen},
        postcode={518172},
        state={Guangdong},
        country={China}}


    \begin{keyword}
        Stokes equations \sep Perforated domains \sep Operator preconditioning \sep Homogenization 
    \end{keyword}

    \begin{abstract}
        Many microfluidic and porous-media computations reduce to the same core task: solving a Stokes saddle-point system on a domain perforated by a dense periodic array of pillars, as in deterministic lateral displacement (DLD) particle sorters. After rescaling the device to unit size, the geometry is controlled by a single dimensionless parameter $m$---the number of pillars across the device, equal to the inverse period. In realistic devices $m$ reaches the hundreds or thousands, and as it grows the Stokes inf-sup constant decays like $m^{-1}$, the pressure Schur complement becomes severely ill-conditioned, and standard block solvers slow down in proportion to the pillar density.

        We remove this bottleneck by identifying the pressure norm that the divergence operator induces on such geometries. For periodic pillar arrays in the proportional-hole regime, we prove that this inf-sup norm is uniformly equivalent to the $L^2+\sigma_\eps H^1$ $K$-functional norm at the pore scale $\sigma_\eps\asymp\eps$, with constants independent of the period, the pillar count $m$, and the mesh size $h$. The equivalence identifies the perforated Stokes problem with a Brinkman problem at a homogenized permeability, and its Riesz map reduces to a pressure-mass inverse plus a scaled stiffness inverse. Combined with operator preconditioning, this yields a block preconditioner built from standard algebraic-multigrid solves whose iteration count is essentially independent of both mesh size and pillar density. Two-dimensional Taylor--Hood experiments confirm the predicted robustness in mesh refinement, pillar density, geometric scale, close packing, and time step.
    \end{abstract}
\end{frontmatter}

\section{Introduction}\label{sec:intro}

Deterministic lateral displacement (DLD) devices separate particles by guiding a viscous flow through a periodic array of micron-scale pillars \cite{huang2004continuous}. Their design requires resolving the pore-scale Stokes flow: streamlines determine particle deflection, while pressure drop and shear determine operating conditions. The same computation arises in permeability calculations and in pore-scale models of fibrous and porous media \cite{Lu2020}. In each case, the governing linear system is a Stokes saddle-point problem posed on a large, densely perforated domain.

We consider a macroscopic domain $\Omega$ containing a periodic pillar array and write $\Omega_m$ for its fluid part. The period is $\eps$, the obstacle size is $a_\eps$, and $m\asymp\eps^{-1}$ is the number of cells across a unit-size device. Throughout this introduction, $d\in\{2,3\}$ and $q\in L_0^2(\Omega_m)$. At the discrete level, $A$ denotes the velocity block, $B$ the divergence matrix, and $S=BA^{-1}B^\top$ the pressure Schur complement.

The difficulty is geometric. A pillar array is a multiply connected pressure network, not a collection of independent pores: pressure information passes through a long sequence of narrow throats. In the proportional-hole regime, the continuous LBB constant satisfies $\beta(\Omega_m)=\Theta(m^{-1})$ \cite{infsup_perforation}. Consequently, $\kappa(S)=\Theta(m^2)$ and standard block preconditioners acquire iteration counts that grow with the number of pillars. This is not a mesh-resolution effect; it is the operator-level imprint of the perforated geometry.

Operator preconditioning reduces the issue to identifying the pressure norm induced by the divergence operator \cite{Mardal2010,Bramble1988,loghin2004analysis}:
\[
    \|q\|_{*,\Omega_m}:=\sup_{\mathbf v\in H_0^1(\Omega_m)^d\setminus\{\mathbf0\}}
    \frac{(q,\nabla\cdot\mathbf v)_{\Omega_m}}{\|\nabla\mathbf v\|_{L^2(\Omega_m)}}.
\]
The Schur complement is the Galerkin realization of this norm squared. Thus a pressure norm uniformly equivalent to $\norm{\cdot}_{*,\Omega_m}$ immediately specifies a robust Schur-complement block.

Our main result gives such a characterization for periodic pillar arrays. Define
\[
 \norm{q}_{L^2+\sigma_\eps H^1}:=\inf_{\tilde q\in H^1(\Omega_m)}
 \bigl(\norm{q-\tilde q}_{L^2(\Omega_m)}^2+
       \sigma_\eps^2\norm{\nabla\tilde q}_{L^2(\Omega_m)}^2\bigr)^{1/2},
\]
where $\sigma_\eps=\eps|\log(a_\eps/\eps)|^{1/2}$. For proportional holes, $\sigma_\eps\asymp\eps$. We prove the two-sided equivalence
\begin{equation}\label{eq:intro_equivalence}
    c\,\norm{q}_{L^2+\sigma_\eps H^1}
    \;\le\;
    \norm{q}_{*,\Omega_m}
    \;\le\;
    C\,\norm{q}_{L^2+\sigma_\eps H^1},
    \qquad \forall q\in L_0^2(\Omega_m),
\end{equation}
with constants $c,C>0$ independent of $\eps$ (and depending only on the fixed outer domain and reference-cell geometry). The theorem identifies the precise scale at which mass and pressure-gradient effects must be combined. It upgrades the uniform pressure estimates used in homogenization \cite{Allaire1991a,Allaire1991,Lu2020} to a norm equivalence valid for every pressure test function.

The proof is tailored to perforated networks. Its central tool is the homogenization restriction operator $R_\eps:H_0^1(\Omega)^d\to H_0^1(\Omega_m)^d$ \cite{Allaire1991a,Allaire1991,Lu2020}. Dualizing this operator yields a pressure extension from the perforated domain to the filled domain. A weighted Ne\v{c}as estimate on the fixed domain then produces the lower bound in \eqref{eq:intro_equivalence}; the upper bound follows directly from integration by parts and Poincar\'e's inequality. This mechanism is native to a periodic pillar network and isolates the close-packing deterioration in the restriction-operator constant.

The resulting Riesz map has a simple discrete form,
\begin{equation}\label{eq:intro_riesz}
    M_K^{-1}=M_p^{-1}+\sigma_\eps^{-2}L_p^{-1},
\end{equation}
where $M_p$ and $L_p$ are the pressure mass and Neumann stiffness matrices. Equation~\eqref{eq:intro_riesz} leads to a pressure block assembled from a mass inverse and a scaled stiffness inverse. Together with operator preconditioning, it yields a block preconditioner whose quality is independent of the number of perforations, subject to the stated discrete stability assumptions.

The thin-channel result of Sande, Koch, Kuchta and Mardal \cite{sande2025robust} shares the same $L^2+\sigma H^1$ functional form, but addresses a different geometry using a channel-specific coarse partition and quasi-interpolation argument. Our restriction--extension construction instead treats a multiply connected periodic network. In a complementary direction, Meier, B\"ansch and Frank \cite{Meier2022} proposed combining pressure mass and stiffness components for channel-dominated flows. Equation~\eqref{eq:intro_riesz} supplies the geometry-dependent scale and the norm equivalence that make this combination robust for periodic pillar arrays.

The contributions are:
\begin{enumerate}
    \item \textbf{Sharp continuous pressure norm.} Theorem~\ref{thm:main} proves the uniform equivalence \eqref{eq:intro_equivalence} with explicit scale $\sigma_\eps$. The proof is a PDE-level restriction--extension argument rather than a cellwise construction.
    \item \textbf{Discrete realization and robust block preconditioning.} Under the stated Fortin assumption, Corollary~\ref{cor:discrete_equiv} transfers the norm equivalence to stable finite-element pairs. Section~\ref{subsec:riesz_derivation} derives \eqref{eq:intro_riesz} and the associated triangular preconditioner.
    \item \textbf{Numerical validation in pillar arrays.} Section~\ref{sec:numerical} tests mesh refinement, pillar density, geometric scale, close packing, and the discretized time-dependent Stokes extension on two-dimensional Taylor--Hood discretizations.
\end{enumerate}

Section~\ref{sec:problem} specifies the geometry, Stokes problem, and supporting estimates. Section~\ref{sec:main} proves the norm equivalence and its discrete counterpart. Section~\ref{sec:preconditioner} derives the Riesz map and preconditioner; Section~\ref{sec:numerical} presents the experiments; and Section~\ref{sec:conclusion} concludes.

\section{Mathematical Description of the problem}\label{sec:problem}

\paragraph{Asymptotic notation.}
For nonnegative quantities depending on the small scale $\eps$, $f\asymp g$ means that $c g\le f\le C g$ with constants $c,C>0$ independent of $\eps$; $f\sim g$ means that $f/g$ tends to a finite, nonzero limit in the indicated asymptotic regime. In descriptive non-asymptotic statements, $\sim$ denotes an approximate numerical scale. The notation $x\uparrow x_0$ means that $x$ increases towards $x_0$.

\subsection{Geometric Configuration of the Periodic Pillars}

\revision{Following the two-scale framework of Allaire \cite{Allaire1991a,Allaire1991} for periodically perforated domains, we describe the pillar array through a single \emph{model obstacle} placed in a reference cell and replicated over a lattice. Two independent length scales govern the geometry: the inter-pillar \emph{period} $\eps>0$, which fixes the spacing of the lattice, and the \emph{hole size} $a_\eps>0$, which fixes the diameter of each individual pillar. The perforated fluid domain is obtained by removing the rescaled obstacles from a fixed macroscopic container, and all subsequent estimates are made uniform in the ratio $a_\eps/\eps$.}{Following the two-scale framework of Allaire \cite{Allaire1991a,Allaire1991} for periodically perforated domains, we describe the pillar array through one \emph{reference cell} containing one \emph{model obstacle}, replicated over a lattice. The reference cell fixes the local pore geometry; the macroscopic container fixes the device geometry. Two independent length scales govern the construction: the inter-pillar \emph{period} $\eps>0$, which is the size of a physical cell, and the \emph{hole size} $a_\eps>0$, which fixes the diameter of each individual pillar. The perforated fluid domain is obtained by removing these rescaled obstacles from the fixed macroscopic container, and all estimates below are uniform as $\eps\to0$ under the stated separation assumptions.}

\paragraph{Macroscopic container.}
Let the unperforated (``filled'') domain be $\Omega \subset \mathbb{R}^d$ ($d \in \{2, 3\}$), built over a rectangular cross-section $\Omega_{xy} = (0, L_x) \times (0, L_y)$. For $d=2$ the domain is planar, $\Omega = \Omega_{xy}$; for $d=3$ it is the microfluidic channel $\Omega = \Omega_{xy} \times (0, L_z)$ of height $L_z$. The macroscopic dimensions $L_x,L_y,L_z$ are fixed, independent of $\eps$.

\paragraph{Reference cell and model obstacle.}
In the cross-sectional plane we fix the two lattice vectors
\begin{equation}
    \hat{\mathbf{e}}_1 = (1, \delta), \qquad \hat{\mathbf{e}}_2 = (0, 1), \qquad \delta \in [0, 1),
\end{equation}
where $\delta$ is the DLD row-shift fraction ($\delta=0$ recovers a square array). Let $P_0 \subset \mathbb{R}^2$ be the open unit cell centered at the origin and spanned by $\hat{\mathbf{e}}_1,\hat{\mathbf{e}}_2$, and let the model obstacle $T \subset \mathbb{R}^2$ be a closed, connected, bounded set with Lipschitz boundary. Throughout we impose the following standing hypotheses on $(P_0,T)$, uniform in $\eps$:
\begin{itemize}
    \item[\textbf{(G1)}] \emph{Nondegeneracy.} There exist concentric balls and constants $0 < r_0 \le R_0$ with
    \begin{equation}\label{eq:two_ball}
        B(0, r_0) \subset T \subset B(0, R_0) \subset P_0,
    \end{equation}
    so that $T$ is comparable to a disk at scale $a_\eps$ and is compactly contained in its cell. In particular $P_0 \setminus T$ is connected, so the perforated cell carries a uniform fluid neighborhood around the obstacle.
    \item[\textbf{(G2)}] \emph{Dilute separation.} By \eqref{eq:two_ball} the physical pillar $a_\eps T$ is contained in the ball $B(\mathbf 0, a_\eps R_0)$, so $a_\eps R_0$ is its circumscribed radius. We define the \emph{relative pillar size} as the radius-to-period ratio
    \begin{equation}\label{eq:kappa_def}
        \kappa \;:=\; \sup_\eps \frac{a_\eps R_0}{\eps} \;\in\; \Bigl(0,\tfrac12\Bigr),
    \end{equation}
    and assume $\kappa<\tfrac12$. Equivalently, every pillar fits inside a disk of radius $\kappa\eps$ about its node, so adjacent pillars (whose nodes are at distance $\ge\eps$) are separated by pore throats of width at least $(1-2\kappa)\eps$; the bound $\kappa<\tfrac12$ keeps these throats of width $\Theta(\eps)$ and the solid volume fraction away from close packing. For the disk pillars used in the numerical experiments ($T=B(\mathbf0,1)$, $r_0=R_0=1$), the scale $a_\eps$ is exactly the pillar radius and $\kappa$ reduces to the radius-to-pitch ratio reported there.
\end{itemize}

\paragraph{Periodic distribution.}
The lattice nodes in the cross-sectional plane are
\begin{equation}
    \mathbf{x}_{i,j} = \epsilon (i \hat{\mathbf{e}}_1 + j \hat{\mathbf{e}}_2), \qquad (i,j) \in \mathbb{Z}^2 .
\end{equation}
The pillar attached to node $(i,j)$ is the model obstacle scaled to size $a_\eps$ and translated to $\mathbf{x}_{i,j}$; in three dimensions it is extruded across the full channel height as a cylinder of cross-section $a_\eps T$:
\begin{equation}
    T_{\epsilon}^{i,j} =
    \begin{cases}
        \mathbf{x}_{i,j} + a_{\epsilon} T                   & \text{for } d = 2, \\
        (\mathbf{x}_{i,j} + a_{\epsilon} T) \times [0, L_z] & \text{for } d = 3 .
    \end{cases}
\end{equation}

\paragraph{Interior holes and the perforated domain.}
As in Allaire's construction, only holes lying strictly inside the container are retained, so that no pillar is truncated by the lateral walls $\partial\Omega_{xy}$ and the no-slip boundary stays clean. Writing $P_\epsilon^{i,j} := \mathbf{x}_{i,j} + \epsilon P_0$ for the physical cell of node $(i,j)$, the active index set is
\begin{equation}
    \mathcal{I}_{\epsilon} := \bigl\{ (i,j) \in \mathbb{Z}^2 : \overline{P_\epsilon^{i,j}} \subset \Omega_{xy} \bigr\}.
\end{equation}
The effective (perforated) fluid domain is then the container with all interior pillars removed,
\begin{equation}\label{eq:perforated_domain}
    \Omega_m := \Omega \setminus \bigcup_{(i,j) \in \mathcal{I}_{\epsilon}} T_{\epsilon}^{i,j},
\end{equation}
where the subscript $m \propto \epsilon^{-1}$ records the linear pillar density per unit length. By \textbf{(G1)}--\textbf{(G2)} the domain $\Omega_m$ is connected with Lipschitz boundary $\partial \Omega_m = \partial \Omega \cup \bigl(\bigcup_{(i,j)\in\mathcal I_\eps} \partial T_{\epsilon}^{i,j}\bigr)$, and its geometry is controlled by the two parameters $(\eps, a_\eps)$ alone. The proportional-hole regime $a_\eps \sim \eps$, in which $\sigma_\eps \asymp \eps \asymp m^{-1}$, is the setting of our main result.

\subsection{The Stokes Problem and the inf-sup Condition}

We consider the steady, incompressible Stokes equations modeling the creeping flow of a viscous fluid through the effective fluid domain $\Omega_m$:
\begin{equation} \label{eq:stokes_strong}
    \begin{cases}
        -\mu \Delta \mathbf{u} + \nabla p = f & \text{in } \Omega_m,          \\
        \nabla \cdot \mathbf{u} = 0           & \text{in } \Omega_m,          \\
        \mathbf{u} = 0                        & \text{on } \partial \Omega_m,
    \end{cases}
\end{equation}
where $\mathbf{u}$ is the fluid velocity, $p$ is the pressure, $\mu > 0$ is the dynamic viscosity, and $f$ represents the body force. The boundary $\partial \Omega_m = \partial \Omega \cup (\cup_{i,j} \partial T_{\epsilon}^{i,j})$ incorporates the no-slip condition on both the exterior channel walls and the surfaces of all internal pillars.

The viscosity enters \eqref{eq:stokes_strong} only as a scalar multiple of the velocity bilinear form and can be normalized away: dividing the momentum balance by $\mu$ and setting $p\mapsto p/\mu$, $f\mapsto f/\mu$ yields the unit-viscosity Stokes system. We therefore take $\mu=1$ throughout the steady analysis without loss of generality; in particular, the inf-sup condition and the pressure-norm equivalence of Section~\ref{sec:main} are purely geometric and independent of $\mu$. The viscosity re-enters explicitly in the time-discrete extension of Section~\ref{sec:preconditioner}, through the combination $(\tau\mu)^{1/2}$.

To cast \eqref{eq:stokes_strong} into a variational form, we introduce the standard functional spaces. The velocity space is $V(\Omega_m) = H_0^1(\Omega_m)^d$, equipped with the norm $\|\mathbf{u}\|_V = \|\nabla \mathbf{u}\|_{L^2(\Omega_m)}$. The pressure space is $Q(\Omega_m) = L_0^2(\Omega_m) = \{ q \in L^2(\Omega_m) : \int_{\Omega_m} q \, \mathrm{d}x = 0 \}$, equipped with the standard $L^2$-norm. Throughout, we realize spaces ``modulo constants'' as zero-mean subspaces: $L_0^2(D)$ is isometric to the quotient $L^2(D)/\mathbb R$, since $\inf_{c\in\mathbb R}\norm{q-c}_{L^2(D)}$ is attained at the mean $c=\fint_D q$ and equals $\norm{q}_{L^2(D)}$ for $q\in L_0^2(D)$; correspondingly we write $\dot H^1(D):=H^1(D)\cap L_0^2(D)$ for $H^1$ modulo constants, with seminorm $\norm{r}_{\dot H^1(D)}=\norm{\nabla r}_{L^2(D)}$. We use $L_0^2$ (rather than the quotient notation) consistently below.

Multiplying \eqref{eq:stokes_strong} by test functions and integrating by parts (using the no-slip condition to discard boundary terms) yields the mixed weak formulation: find $(\mathbf{u}, p) \in V(\Omega_m) \times Q(\Omega_m)$ such that
\begin{equation} \label{eq:stokes_weak}
    \begin{aligned}
        a(\mathbf{u}, \mathbf{v}) + b(\mathbf{v}, p) &= \langle f, \mathbf{v} \rangle && \text{for all } \mathbf{v} \in V(\Omega_m), \\
        b(\mathbf{u}, q)                              &= 0                            && \text{for all } q \in Q(\Omega_m),
    \end{aligned}
\end{equation}
where, with the normalization $\mu = 1$, the bilinear forms are
\begin{equation} \label{eq:bilinear_forms}
    a(\mathbf{u}, \mathbf{v}) := \int_{\Omega_m} \nabla \mathbf{u} : \nabla \mathbf{v} \, \mathrm{d}x, \qquad
    b(\mathbf{v}, q) := -\int_{\Omega_m} q \, \nabla \cdot \mathbf{v} \, \mathrm{d}x,
\end{equation}
and $\langle f, \mathbf{v} \rangle$ denotes the duality pairing of the body force with the test field. The form $a$ is coercive on $V(\Omega_m)$ and the off-diagonal form $b$ couples velocity and pressure; the well-posedness of \eqref{eq:stokes_weak} thus hinges on the stability of $b$, measured by the inf-sup condition below.

The well-posedness of the Stokes saddle-point problem, and the robust coupling between the velocity and pressure spaces, are governed by the classical Ladyzhenskaya-Babuška-Brezzi (LBB) condition. The continuous LBB constant for the perforated pillar-array domain $\Omega_m$ is defined as:
\begin{equation} \label{eq:infsup_def}
    \beta(\Omega_m) := \inf_{q \in Q(\Omega_m) \setminus \{0\}} \sup_{\mathbf{v} \in V(\Omega_m) \setminus \{0\}} \frac{b(\mathbf{v}, q)}{\|q\|_{L^2(\Omega_m)} \, \|\nabla \mathbf{v}\|_{L^2(\Omega_m)}}.
\end{equation}
\section{Norm Equivalence in the Perforated Domain}\label{sec:main}

\subsection{Pressure-stability norms}

Define
\[
\sigma_\eps := \eps \bigl|\log(a_\eps/\eps)\bigr|^{1/2},
\]
so that when $a_\eps\propto\eps$, $\sigma_\eps = \Theta(\eps)=\Theta(m^{-1})$.
This is the natural perforation scale appearing in the restriction
estimate \cite{Allaire1991,Lu2020}, and it plays the role of the local channel
width in our $K$-functional norm.
The $K$-functional (sum norm) is defined by
\begin{equation}\label{eq:K_norm_OmM}
\norm{q}_{L^2+\sigma_\eps H^1} := \inf_{\tilde q\in H^1(\OmM)} \Bigl( \norm{q-\tilde q}_{L^2(\OmM)}^2 + \sigma_\eps^2\norm{\nabla\tilde q}_{L^2(\OmM)}^2 \Bigr)^{1/2}.
\end{equation}

To estimate the inf-sup condition, we introduce the inf-sup norm on $Q(\OmM)$:
\[
\norm{q}_{*,\OmM} := \sup_{\mathbf{v}\in V(\OmM)\setminus\{\mathbf{0}\}} \frac{\int_{\OmM} q\,\nabla\cdot\mathbf{v}\,dx}{\norm{\nabla\mathbf{v}}_{L^2(\OmM)}}.
\]
The main purpose of this paper is to establish the equivalence of $\norm{q}_{*,\OmM}$ and $\norm{q}_{L^2+\sigma_\eps H^1}$ with constants independent of $\eps$.

\begin{theorem}
\label{thm:main}
\revision{Under the geometric assumptions stated in Section 1, there exist constants $c,C>0$ independent of $\eps$ (and hence independent of the pillar density $m$) such that for all $q\in Q(\OmM)$,}{Under the geometric assumptions of Section~\ref{sec:problem}, in either the planar ($d=2$) or extruded-channel ($d=3$) setting, there exist constants $c,C>0$ independent of $\eps$ (and hence independent of the pillar density $m$) such that for all $q\in Q(\OmM)$,}
\begin{equation}
c\,\norm{q}_{L^2+\sigma_\eps H^1} \le \norm{q}_{*,\OmM} \le C\,\norm{q}_{L^2+\sigma_\eps H^1}.
\end{equation}
\end{theorem}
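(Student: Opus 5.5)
We prove the two inequalities separately. The upper bound is elementary. The lower bound rests on the homogenization restriction operator $R_\eps$ of Allaire and Lu, combined with a Ne\v{c}as-type estimate on the fixed filled domain $\Omega$.

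\emph{Upper bound.} Fix $q\in Q(\OmM)$ and any split $q=(q-\tilde q)+\tilde q$ with $\tilde q\in H^1(\OmM)$. For $\mathbf v\in V(\OmM)$ we write
\[
\int_{\OmM}q\,\nabla\cdot\mathbf v=\int_{\OmM}(q-\tilde q)\nabla\cdot\mathbf v-\int_{\OmM}\nabla\tilde q\cdot\mathbf v .
\]
The first term is bounded by $\sqrt d\,\norm{q-\tilde q}_{L^2}\norm{\nabla\mathbf v}_{L^2}$. For the second term we use the perforated Poincar\'e inequality $\norm{\mathbf v}_{L^2(\OmM)}\le C\eps\norm{\nabla\mathbf v}_{L^2(\OmM)}$. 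This holds cellwise because $\mathbf v$ vanishes on each pillar $a_\eps T$, which contains a ball of radius $\asymp\eps$ by (G1) in the proportional regime. In the extruded case $d=3$ it holds on each cross-section, which is integrated in $z$, and the boundary strip is handled by the walls. Since $\eps\lesssim\sigma_\eps$ when $a_\eps/\eps$ stays bounded away from $1$ by (G2), the second term is bounded by $C\sigma_\eps\norm{\nabla\tilde q}\norm{\nabla\mathbf v}$. Cauchy--Schwarz and then taking the infimum over $\tilde q$ give $\norm{q}_{*}\le C\norm{q}_{L^2+\sigma_\eps H^1}$.

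\emph{Lower bound, step 1: extension by duality.} We take $R_\eps:H_0^1(\Omega)^d\to H_0^1(\OmM)^d$ with the standard properties:
\begin{itemize}
\item[(i)] $R_\eps\mathbf v=\mathbf v$ whenever $\mathbf v$ vanishes on the holes;
\item[(ii)] $\nabla\cdot R_\eps\mathbf v=0$ whenever $\nabla\cdot\mathbf v=0$; more precisely, $\int_{\OmM}q\,\nabla\cdot R_\eps\mathbf v=\int_\Omega Eq\,\nabla\cdot\mathbf v$, where $E$ is extension by cell averages of $q$ over the perforated cells;
\item[(iii)] $\norm{\nabla R_\eps\mathbf v}_{L^2(\OmM)}\le C(\norm{\nabla\mathbf v}_{L^2(\Omega)}+\sigma_\eps^{-1}\norm{\mathbf v}_{L^2(\Omega)})$.
\end{itemize}
We define the extended pressure $P q\in L^2_0(\Omega)$ by duality, $\langle Pq,\nabla\cdot\mathbf v\rangle_\Omega:=(q,\nabla\cdot R_\eps\mathbf v)_{\OmM}$. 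Concretely, $Pq=q$ on $\OmM$ and $Pq$ equals the cell average on each hole. This gives, for all $\mathbf v\in H_0^1(\Omega)^d$,
\[
(Pq,\nabla\cdot\mathbf v)_\Omega\le\norm{q}_{*,\OmM}\,C\bigl(\norm{\nabla\mathbf v}+\sigma_\eps^{-1}\norm{\mathbf v}\bigr).
\]

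\emph{Lower bound, step 2: weighted Ne\v{c}as on $\Omega$.} The right-hand side of the last display is a dual norm on the fixed domain, namely the $H^{-1}\cap\sigma_\eps^{-1}\dot H^{-1}$-type norm of $\nabla Pq$, which is $\sup_{\mathbf v}\langle\nabla Pq,\mathbf v\rangle/\norm{\mathbf v}_{H^1_0\cap\sigma_\eps^{-1}L^2}$. The dual of the intersection $H_0^1\cap\sigma_\eps^{-1}L^2$ is the sum $H^{-1}+\sigma_\eps L^2$. Applying the Ne\v{c}as inequality on the Lipschitz domain $\Omega$, together with the interpolation/$K$-functional commutation $\nabla:(L^2+\sigma H^1)\to(H^{-1}+\sigma L^2)$ being an isomorphism up to constants, gives
\[
\norm{Pq}_{L^2+\sigma_\eps H^1(\Omega)}\le C\sup_{\mathbf v}\frac{(Pq,\nabla\cdot\mathbf v)}{\norm{\nabla\mathbf v}+\sigma_\eps^{-1}\norm{\mathbf v}}.
\]
We establish the commutation by decomposing $\nabla Pq=g_1+\sigma g_2$ and solving Neumann problems. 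The standard way is a Bogovski\u{\i}-type right inverse of the divergence, bounded simultaneously $L^2\to H_0^1$ and $H^{-1}\to L^2$, followed by real interpolation with uniform constants.

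\emph{Lower bound, step 3: restriction.} The $K$-norm is monotone under restriction to $\OmM$: restrict the splitting $\tilde q$ to $\OmM$. Since $Pq|_{\OmM}=q$, this gives $\norm{q}_{L^2+\sigma_\eps H^1(\OmM)}\le\norm{Pq}_{L^2+\sigma_\eps H^1(\Omega)}$, and hence the lower bound. The mean-zero normalization is harmless because constants lie in $\tilde q$ at zero gradient cost.

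The main obstacle is step 2. We need a Ne\v{c}as inequality for the $K$-functional on $\Omega$ with constants uniform in the parameter $\sigma$, which requires a right inverse of the divergence bounded simultaneously on $L^2$ and $H^{-1}$. For this we plan to invoke the Bogovski\u{\i} operator (Geissert--Heck--Hieber / Costabel--McIntosh), which has exactly these mapping properties on Lipschitz domains. A secondary difficulty is verifying the $\sigma_\eps^{-1}\norm{\mathbf v}_{L^2}$ form of (iii) for the extruded $d=3$ geometry, by applying the 2D cellwise construction slice by slice.
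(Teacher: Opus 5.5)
Your proof has the same architecture as the paper's. The upper bound comes from Cauchy--Schwarz plus the perforated Poincar\'e inequality, which you derive cellwise at scale $\eps$ rather than citing. The lower bound comes from a pressure extension obtained by dualizing $R_\eps$, a weighted Ne\v{c}as inequality on the filled domain $\Omega$, and restriction of the competitor back to $\OmM$. Two sub-steps differ.

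First, the extension. The paper uses only the abstract properties (1)--(3) of Proposition~\ref{prop:restriction_operator_lu_allaire}. It obtains $\mathcal E_\eps q$ from de Rham (Lemma~\ref{lem:de_rham}) and then needs Lemma~\ref{lem:pressure_extension_consistency} (property (1) plus a Bogovskii field on $\OmM$) to show that the extension agrees with $q$ on $\OmM$. You instead use the sharper Tartar identity: on each perforated cell, $\nabla\cdot R_\eps\mathbf v=\nabla\cdot\mathbf v+|F|^{-1}\int_{T}\nabla\cdot\mathbf v$, with $T$ the pillar and $F$ the fluid part of the cell. This makes the extension explicit (Tartar's extension by fluid-cell averages), so consistency is immediate and no de Rham step is needed. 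The price is that you rely on a structural feature of one particular construction, whereas the paper's argument works for any $R_\eps$ with properties (1)--(3).

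Second, Lemma~\ref{lem:weighted_necas_K}. The paper interpolates the two endpoint bounds for $G=\nabla$ directly. But interpolating $G^{-1}$ on its ranges only controls the $K$-functional of $\nabla Q$ over splittings into gradients, and that quantity is bounded \emph{below}, not above, by $\norm{\nabla Q}_{H^{-1}+\sigma L^2}$. So a retraction onto gradients, bounded at both endpoints, is tacitly needed. Your two-scale Bogovskii operator $B$ supplies exactly this. $S:=-B^*$ is a left inverse of $\nabla$ that is bounded $H^{-1}(\Omega)^d\to L_0^2(\Omega)$ and $L^2(\Omega)^d\to\dot H^1(\Omega)$. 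Hence every splitting $\nabla Q=g_0+g_1$ yields $Q=Sg_0+Sg_1$ with the required bounds, uniformly in $\sigma$, and the sum--intersection duality finishes as in the paper. On this step your route is the more complete one.

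Two corrections to that step. The negative-order space on which $B$ must map boundedly into $L^2$ is the mean-zero dual of $H^1(\Omega)$, not of $H_0^1(\Omega)$; this is what Costabel--McIntosh provide. Also drop the Neumann-problem alternative: a general element of $H^{-1}(\Omega)^d$ cannot be tested against $\nabla\phi$ with $\phi\in H^1(\Omega)$, so it cannot be Helmholtz-split that way.

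For $d=3$, the paper simply imports the restriction estimate. If you want to verify it yourself, note that a slice-by-slice 2D construction must still control $\partial_z$ of the correction and preserve the full three-dimensional divergence, and neither is automatic.
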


\paragraph{From homogenization to a pressure norm.}
Theorem~\ref{thm:main} sits at the end of a long arc of ideas that traces back
to Brinkman's 1947 ansatz \cite{Brinkman1947}. Seeking the drag on a dense swarm of particles,
Brinkman wrote down by hand the equation
$-\Delta\mathbf u+\sigma^{-2}\mathbf u+\nabla p=\mathbf f$, an interpolation
between viscous Stokes flow and Darcy's law---a phenomenological guess at the
physics between the dilute and the densely packed regime. Homogenization theory
later explained why the guess is right: as the perforation period $\eps\to0$,
Stokes flow through a domain riddled with holes converges to a Darcy or Brinkman
law, with the friction coefficient $\sigma_\eps^{-2}$ \emph{emerging from the
holes themselves}---the ``strange term coming from nowhere'' of Cioranescu and
Murat \cite{CioranescuMurat1997}, made systematic by Tartar
\cite{Tartar1980,SanchezPalencia1980} and Allaire \cite{Allaire1991a,Allaire1991}
and unified by Lu \cite{Lu2020}. The technical engine of that theory is the
\emph{restriction operator} $R_\eps$: a divergence-preserving map that squeezes a
velocity field from the filled domain into the perforated one, paying an energy
price calibrated precisely to the perforation scale $\sigma_\eps$. It was built
to take the limit---to manufacture the homogenized equation out of the
pore-scale flow.

Our proof runs this machine in reverse. Rather than send $\eps\to0$ to read off
a homogenized coefficient, we dualize $R_\eps$ at \emph{fixed} $\eps$ and read
off a pressure norm. The very scale $\sigma_\eps$ that homogenization deposits
as the Brinkman drag reappears, untouched, as the weight in the $K$-functional
pressure norm $\norm{\cdot}_{L^2+\sigma_\eps H^1}$---and hence as the coefficient
of the preconditioner of Section~\ref{sec:preconditioner}. Where the
homogenization literature records a one-sided estimate on a single Stokes
pressure, we obtain the two-sided equivalence of Theorem~\ref{thm:main}, valid
for every $q\in L_0^2(\OmM)$. The Brinkman interpolation is thus not merely an
analogy for the preconditioner: it is the same object---the homogenized
permeability---read backwards from the pore scale.

\paragraph{Roadmap of the proof.}
The upper bound (Section~\ref{subsec:upper_bound}) is elementary, following from a
triangle inequality and the $\sigma_\eps$-weighted Poincar\'e inequality on
$\OmM$. The lower bound is the heart of the paper and proceeds in two steps:
Section~\ref{subsec:restriction_extension} dualizes the restriction operator
$R_\eps : H_0^1(\Omega)^d\to H_0^1(\OmM)^d$ through de Rham's theorem into a
pressure extension $\mathcal E_\eps : L_0^2(\OmM)\to L_0^2(\Omega)$, and
Section~\ref{sec:lower_bound_proof} makes this quantitative via a fixed-domain
weighted Ne\v{c}as inequality on $\Omega$ at the $K$-functional scale. The
continuous-to-discrete transport via a Fortin operator is carried out in
Section~\ref{subsec:fortin}.

\subsection{Upper bound}\label{subsec:upper_bound}

\begin{proof}[Proof of Theorem~\ref{thm:main}, upper bound]
Let $q\in Q(\OmM)$ be arbitrary and choose any $\tilde q\in H^1(\OmM)$. By the triangle inequality for the inf-sup norm,
\[
\norm{q}_{*,\OmM} \le \norm{q-\tilde q}_{*,\OmM} + \norm{\tilde q}_{*,\OmM}.
\]

For the first term, we evaluate the inf-sup supremum directly. For any $\mathbf{v}\in V(\OmM)$ with $\mathbf{v} \neq \mathbf{0}$, we apply the Cauchy-Schwarz inequality:
\[
\int_{\OmM}(q-\tilde q)\nabla\cdot\mathbf{v}\,dx
\le \norm{q-\tilde q}_{L^2(\OmM)} \norm{\nabla\cdot\mathbf{v}}_{L^2(\OmM)}
\le \sqrt{d} \norm{q-\tilde q}_{L^2(\OmM)} \norm{\nabla\mathbf{v}}_{L^2(\OmM)},
\]
which immediately yields
\[
\norm{q-\tilde q}_{*,\OmM} \le \sqrt{d} \norm{q-\tilde q}_{L^2(\OmM)}.
\]

For the second term, since $\tilde{q} \in H^1(\OmM)$ and $\mathbf{v} \in H_0^1(\OmM)^d$, we integrate by parts and apply the $\sigma_\eps$-weighted Poincar\'e inequality on $\Omega_m$, $\norm{\mathbf{v}}_{L^2(\OmM)} \le C_P\,\sigma_\eps\,\norm{\nabla\mathbf{v}}_{L^2(\OmM)}$ with $C_P$ independent of $\eps$ \cite{infsup_perforation}:
\[
\int_{\OmM} \tilde{q}\,\nabla\cdot\mathbf{v}\,dx
= -\int_{\OmM} \nabla\tilde{q} \cdot \mathbf{v}\,dx
\le \norm{\sigma_\eps\nabla\tilde{q}}_{L^2(\OmM)} \cdot \sigma_\eps^{-1}\norm{\mathbf{v}}_{L^2(\OmM)}
\le C_P \norm{\sigma_\eps\nabla\tilde{q}}_{L^2(\OmM)} \norm{\nabla\mathbf{v}}_{L^2(\OmM)}.
\]
Taking the supremum over all non-zero $\mathbf{v} \in V(\OmM)$ gives
\[
\norm{\tilde q}_{*,\OmM} \le C_P\,\norm{\sigma_\eps\nabla\tilde{q}}_{L^2(\OmM)}.
\]

Combining these estimates, we obtain
\[
\norm{q}_{*,\OmM} \le \sqrt{d} \norm{q-\tilde q}_{L^2(\OmM)} + C_P\,\norm{\sigma_\eps\nabla\tilde q}_{L^2(\OmM)}.
\]
By the Cauchy-Schwarz inequality applied to the vector $(\sqrt{d}, C_P)$, this is bounded by
\[
\norm{q}_{*,\OmM} \le \sqrt{d + C_P^2} \Bigl( \norm{q-\tilde q}_{L^2(\OmM)}^2 + \norm{\sigma_\eps\nabla\tilde q}_{L^2(\OmM)}^2 \Bigr)^{1/2}.
\]
Since this inequality holds for any $\tilde{q} \in H^1(\OmM)$, we take the infimum over all such $\tilde{q}$ to conclude
\[
\norm{q}_{*,\OmM} \le C_U \norm{q}_{L^2+\sigma_\eps H^1},
\]
where $C_U := \sqrt{d + C_P^2}$ is independent of $\eps$. This establishes the upper bound.
\end{proof}

\subsection{Restriction operator and the pressure extension}\label{subsec:restriction_extension}

The lower bound rests on a \emph{pressure extension} from $\OmM$ to the
filled domain $\Omega$, built by dualizing the restriction operator
introduced by Allaire \cite{Allaire1991}.
The construction in this subsection is purely existential: we recall the
restriction operator $R_\eps$, package its action against pressures into a
linear functional $\ell_q$ on $H_0^1(\Omega)^d$, and apply de Rham's theorem
to produce a pressure $\mathcal E_\eps q\in L^2(\Omega)/\mathbb R$ whose
gradient realizes that functional. The quantitative ingredient that turns
this duality into a $K$-functional bound---a fixed-domain weighted
Ne\v{c}as inequality on $\Omega$---is the engine of the lower bound and is
postponed to Section~\ref{sec:lower_bound_proof}, where it is introduced
exactly at the point of use.

\paragraph{The $K$-functional seminorm on a general domain.}
The proof compares pressures across two domains, so we extend the
$K$-functional norm \eqref{eq:K_norm_OmM} from $\OmM$ to an arbitrary open set
$D\subset\mathbb R^d$: for $r\in L^2(D)$,
\begin{equation}
\label{eq:K_sigma_norm_def}
    \norm{r}_{K_{\sigma_\eps}(D)}
    :=
    \inf_{\phi\in H^1(D)}
    \left(
        \norm{r-\phi}_{L^2(D)}^2
        +
        \sigma_\eps^2
        \norm{\nabla \phi}_{L^2(D)}^2
    \right)^{1/2}
    =
    \norm{r}_{L^2+\sigma_\eps H^1(D)} ,
\end{equation}
a seminorm on $L^2(D)$ and a norm on $L_0^2(D)$. Taking $D=\OmM$ recovers
\eqref{eq:K_norm_OmM}; the lower bound also uses it on the filled domain
$D=\Omega$.

\paragraph{The restriction operator.}
The only external ingredient we need is the following restriction estimate, due
to Allaire \cite{Allaire1991a,Allaire1991} in the periodic proportional-hole regime and
revisited by Lu \cite{Lu2020}.

\begin{proposition}[Restriction operator; \cite{Allaire1991,Lu2020}]
\label{prop:restriction_operator_lu_allaire}
Under the geometric assumptions of Section~\ref{sec:problem}, there exists a
linear operator $R_\eps: H_0^1(\Omega)^d\to H_0^1(\OmM)^d$ with the
following properties.
\begin{enumerate}[label=(\arabic*)]
    \item If $\mathbf v\in H_0^1(\OmM)^d$ and $\widetilde{\mathbf v}$
    denotes its zero extension to $\Omega$, then
    $R_\eps \widetilde{\mathbf v} = \mathbf v$ in $\OmM$.

    \item If $\mathbf \Phi\in H_0^1(\Omega)^d$ satisfies
    $\nabla\cdot \mathbf \Phi=0$ in $\Omega$, then
    $\nabla\cdot (R_\eps \mathbf \Phi)=0$ in $\OmM$.

    \item \revision{There exists a constant $C_R>0$, independent of $\eps$, such that}{There exists a constant $C_R>0$, independent of $\eps$ but dependent on the fixed reference-cell geometry (in particular on the separation from close packing), such that}
    \begin{equation}
    \label{eq:restriction_operator_estimate}
        \norm{\nabla R_\eps \mathbf \Phi}_{L^2(\OmM)}
        \le
        C_R
        \Bigl(
            \norm{\nabla \mathbf \Phi}_{L^2(\Omega)}
         +
            \sigma_\eps^{-1}
            \norm{\mathbf \Phi}_{L^2(\Omega)}
        \Bigr)
    \end{equation}
    for all $\mathbf \Phi\in H_0^1(\Omega)^d$.
\end{enumerate}
\end{proposition}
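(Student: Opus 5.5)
The plan is the classical cellwise construction of Tartar and Allaire, adapted to the lattice of Section~\ref{sec:problem}. In the proportional-hole regime $a_\eps\asymp\eps$, so $\sigma_\eps\asymp\eps$, and it suffices to prove \eqref{eq:restriction_operator_estimate} with $\eps^{-1}$ in place of $\sigma_\eps^{-1}$.

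\emph{Cell decomposition.} Let $\mathbf\Phi\in H_0^1(\Omega)^d$. Outside the union of the active cells $P_\eps^{i,j}$, $(i,j)\in\mathcal I_\eps$, I set $R_\eps\mathbf\Phi:=\mathbf\Phi$; no holes lie there. For $d=3$ I further cut each extruded cell $P_\eps^{i,j}\times(0,L_z)$ into slabs $Y=P_\eps^{i,j}\times(k\eps,(k+1)\eps)$, adjusting the last slab to a height in $[\eps,2\eps)$. Each slab is crossed by the prism $(\mathbf x_{i,j}+a_\eps T)\times(k\eps,(k+1)\eps)$, and its fluid part $Y_f$ is a connected Lipschitz domain of diameter $\asymp\eps$. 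For $d=2$ the local domain is simply $Y_f=P_\eps^{i,j}\setminus T_\eps^{i,j}$.

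\emph{Local problem on $Y_f$.} I would solve for $\mathbf w$ with
\[
\mathbf w=\mathbf\Phi \text{ on } \partial Y\cap\partial Y_f,\qquad
\mathbf w=\mathbf 0 \text{ on the pillar boundary},\qquad
\nabla\cdot\mathbf w=\nabla\cdot\mathbf\Phi+\frac{1}{|Y_f|}\int_{Y\setminus Y_f}\nabla\cdot\mathbf\Phi \text{ in } Y_f .
\]
The compatibility condition holds: $\int_{Y_f}\nabla\cdot\mathbf w$ must equal $\int_{\partial Y}\mathbf\Phi\cdot\mathbf n=\int_Y\nabla\cdot\mathbf\Phi$, and this is exactly what the correction term guarantees. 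Concretely, I take $(\mathbf w,\pi)$ to be the solution of the inhomogeneous Stokes problem $-\Delta\mathbf w+\nabla\pi=-\Delta\mathbf\Phi$ on $Y_f$ with these boundary and divergence data. A Bogovski\u{\i} lifting of the divergence datum would also do; the Stokes choice is made for property~(1). Gluing the local pieces gives $R_\eps\mathbf\Phi$. The traces match across cell faces because $\mathbf w=\mathbf\Phi$ there, so $R_\eps\mathbf\Phi\in H_0^1(\OmM)^d$; on faces lying on $\partial\Omega$ we have $\mathbf\Phi=\mathbf 0$.

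\emph{The three properties.}
\begin{itemize}
\item Property~(1): if $\mathbf\Phi$ is the zero extension of $\mathbf v\in H_0^1(\OmM)^d$, then $\mathbf\Phi$ vanishes on the pillars. Hence the correction term is zero and $\mathbf\Phi|_{Y_f}$ itself solves the local problem, so uniqueness gives $\mathbf w=\mathbf\Phi$.
\item Property~(2): if $\nabla\cdot\mathbf\Phi=0$, the correction term vanishes as well, so $\nabla\cdot\mathbf w=0$ in every $Y_f$. Since $R_\eps\mathbf\Phi$ lies in $H^1$ (no normal jumps), its divergence is zero on all of $\OmM$.
\end{itemize}

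\emph{Estimate~(3) by scaling.} Map $Y$ to a reference cell $\hat Y$ of unit size. The reference fluid region is $\hat Y\setminus(\rho\,T\times\cdot)$ with $\rho=a_\eps/\eps$, and $\rho$ lies in a compact interval $[\rho_{\min},\kappa/R_0]$ with $\kappa<\tfrac12$. Stability of the Stokes problem with Dirichlet data on $\hat Y_f$, together with a trace inequality, gives
\[
\norm{\nabla\hat{\mathbf w}}_{L^2(\hat Y_f)}\le C\norm{\hat{\mathbf\Phi}}_{H^1(\hat Y)}.
\]
Scaling back turns the $L^2$ part into $\eps^{-1}\norm{\mathbf\Phi}_{L^2(Y)}$. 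Squaring and summing over the finitely overlapping cells then yields \eqref{eq:restriction_operator_estimate}.

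\emph{Main obstacle.} The hard step is to make the reference-cell constant uniform in $\rho$. It depends on the inf-sup (Bogovski\u{\i}) constant of $\hat Y_f$ and on the Dirichlet lifting across the throat. Both degenerate as $\kappa\uparrow\tfrac12$, which is the source of the close-packing dependence of $C_R$. My first attempt would use (G1)--(G2) to fix an annulus $B(0,\kappa')\setminus B(0,\kappa/R_0\cdot R_0)$ with $\kappa<\kappa'<\tfrac12$, kept at uniform distance from $\partial\hat Y$. On this annulus I would cut off $\hat{\mathbf\Phi}$ by a fixed smooth function and correct the divergence with a Bogovski\u{\i} operator on a finite union of star-shaped pieces. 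A continuity (or compactness) argument in $\rho$ on $[\rho_{\min},\kappa/R_0]$ would then bound the constants uniformly.
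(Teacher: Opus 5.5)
\paragraph{Two dimensions.} For $d=2$ your construction is the one the paper relies on. The paper gives no proof of this proposition and simply refers to Allaire and Lu. Their restriction operator is Tartar's local Stokes problem on $Y_f=P_\eps^{i,j}\setminus T_\eps^{i,j}$:
\begin{itemize}
\item data $\mathbf\Phi$ on $\partial Y$ and $\mathbf 0$ on the obstacle;
\item divergence $\nabla\cdot\mathbf\Phi+|Y_f|^{-1}\int_{Y\setminus Y_f}\nabla\cdot\mathbf\Phi$;
\item followed by rescaling.
\end{itemize}
Your verification of (1), (2) and of the $\eps^{-1}$ scaling is correct there. Uniformity over a compact range of $\rho=a_\eps/\eps$ is easy: a radial diffeomorphism of $\hat Y$, equal to the identity near $\partial\hat Y$, maps $\rho_1T$ onto $\rho T$ and transports the Stokes constants uniformly.

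\paragraph{The gap: the three-dimensional slab step.} The pillars are extruded through the whole height, so every slab $Y=P_\eps^{i,j}\times(k\eps,(k+1)\eps)$ is pierced by the prism. The obstacle is therefore no longer compactly contained in the cell, which is the standing hypothesis of Tartar's construction. Two things fail.

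\emph{The local problem is ill-posed.} Your Dirichlet datum is $\mathbf\Phi$ on the top and bottom faces and $\mathbf 0$ on the prism wall. It jumps across the edges $\partial(\mathbf x_{i,j}+a_\eps T)\times\{k\eps\}$ whenever $\mathbf\Phi\neq\mathbf 0$ there. Such a datum is not the trace of any $H^1(Y_f)$ field, because the compatibility integral $\int|\mathbf\Phi|^2/\mathrm{dist}(\cdot,\text{edge})$ over the face diverges. So for generic $\mathbf\Phi\in H_0^1(\Omega)^3$ your local problem has no solution.

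\emph{The compatibility identity is false.} The fluid part of $\partial Y$ omits the two pillar cross-sections $D_\pm$ on the top and bottom faces. The flux of your datum is therefore $\int_Y\nabla\cdot\mathbf\Phi-\int_{D_+}\Phi_3+\int_{D_-}\Phi_3$, whereas the prescribed divergence integrates to $\int_Y\nabla\cdot\mathbf\Phi$. For a divergence-free $\mathbf\Phi$ carrying vertical flux through a pillar cross-section, this mismatch is nonzero, so property (2) cannot be obtained this way.

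In short, $R_\eps\mathbf\Phi$ must vanish on the pillars and be $H^1$ across the horizontal interfaces, so its interface trace can never be $\mathbf\Phi$.

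\paragraph{What is missing.} You need a modified interface field $\mathbf G$ with three properties:
\begin{itemize}
\item it vanishes on the pillars;
\item it equals $\mathbf\Phi$ whenever $\mathbf\Phi$ already vanishes there (needed for (1));
\item it carries the same vertical flux as $\mathbf\Phi$ through every cell cross-section (needed for (2)).
\end{itemize}
One choice is $\mathbf G=\mathbf\Phi-E(\mathbf\Phi|_{\text{pillar}})+\eta\,F(z)\,\mathbf e_3$. Here $E$ is a slice-wise extension off $a_\eps T$ supported near the pillar, $\eta$ is a unit-mass bump in the surrounding annulus, and $F(z)$ is the cross-sectional integral of the third component of $E(\mathbf\Phi|_{\text{pillar}})$. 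The slab problems with boundary data $\mathbf G$ and Tartar's divergence datum are then compatible, and your scaling argument applies.

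Appealing to the references the paper cites does not close this gap either, since that cellwise construction presupposes obstacles compactly contained in their cells.
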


In dimension $d=2$ the optimal scale from the explicit cellwise construction of
\cite{Allaire1991} is $\sigma_\eps=\eps|\log(a_\eps/\eps)|^{-1/2}$; for
$a_\eps\sim\eps$ this reduces to $\sigma_\eps\asymp\eps$, which is the regime
considered here. We refer to \cite{Allaire1991,Lu2020} for the construction and \eqref{eq:restriction_operator_estimate}.

\begin{remark}[Scope of the cellwise construction: why close packing is excluded]
\label{rmk:close_packing}
Both the weighted Poincar\'e inequality on $\OmM$ \cite{infsup_perforation}
and the restriction estimate \eqref{eq:restriction_operator_estimate} rest on
the same cellwise mechanism: in each cell the velocity is corrected on the
annular fluid layer between the obstacle and a ball inscribed in the cell. By
the sandwiching assumption $B(0,r_0)\subset T\subset B(0,R_0)\subset P_0$, the
obstacle is pinched between two balls of scale $a_\eps$, while the cell $P_0$ is
controlled from inside by a single ball of scale $\eps$; the correction then
lives on an annulus of radius ratio $\asymp a_\eps/\eps$, and the relevant
constant is the two-dimensional logarithmic (conformal) capacity of that
annulus, which is precisely what produces the factor $|\log(a_\eps/\eps)|$ in
$\sigma_\eps$.

This ball-in-cell picture is faithful in exactly two regimes: vanishing holes
$a_\eps/\eps\to0$, where the annulus is asymptotically a true annulus and the
logarithmic capacity is sharp; and proportional holes with $\kappa$ bounded
away from $1/2$, where the inscribed ball $B(0,R_0)$ and the square cell $P_0$
are comparable up to a $\kappa$-dependent constant. It degrades in the
close-packing limit $\kappa\uparrow1/2$: the fluid between adjacent pillars then
concentrates in thin throats of width $(1-2\kappa)\eps$ at the cell edges and in
the cell corners, a geometry that no single inscribed ball can resolve
(Figure~\ref{fig:ball_vs_cell}).
Replacing the square cell by a ball of radius $\asymp\eps$ discards exactly the
corner and throat fluid that carries the singular behaviour, so the restriction
constant $C_R$ in \eqref{eq:restriction_operator_estimate} is no longer
controlled by this cellwise mechanism and degenerates as $\kappa\uparrow1/2$.
This is why the logarithmic factor in
$\sigma_\eps$ ceases to be the relevant small parameter in this regime: it
measures an annular capacity, whereas the true degeneration is a thin-throat
effect the construction cannot see. The tools used here are therefore tailored
to small and proportionally sized holes with $\kappa$ bounded away from $1/2$;
the dependence of the restriction constant on the hole-size regime is analysed
in detail in \cite{Lu2020}. The close-packing regime requires genuinely
different, throat-resolving estimates and lies outside the scope of the present
paper.
\end{remark}

\begin{figure}[t]
    \centering
    \begin{subfigure}[b]{0.46\textwidth}
        \centering
        \begin{tikzpicture}[scale=1.9, line join=round, >=stealth]
          \fill[red!16, even odd rule] (-1,-1) rectangle (1,1) (0,0) circle (1);
          \fill[blue!12, even odd rule] (0,0) circle (1) (0,0) circle (0.36);
          \draw[very thick] (-1,-1) rectangle (1,1);
          \draw[dashed, thick] (0,0) circle (1);
          \fill[gray!55] (0,0) circle (0.36);
          \draw[thick] (0,0) circle (0.36);
          \draw[<->, thick] (0.36,0) -- (1,0);
          \node[fill=white, inner sep=1pt] at (0.68,0) {\scriptsize $g$};
        \end{tikzpicture}
        \caption{Dilute / proportional holes ($\kappa$ bounded away from $1/2$):
        ball $\approx$ cell.}
        \label{fig:ball_small}
    \end{subfigure}
    \hfill
    \begin{subfigure}[b]{0.46\textwidth}
        \centering
        \begin{tikzpicture}[scale=1.9, line join=round, >=stealth]
          \fill[red!16, even odd rule] (-1,-1) rectangle (1,1) (0,0) circle (1);
          \fill[blue!12, even odd rule] (0,0) circle (1) (0,0) circle (0.85);
          \draw[very thick] (-1,-1) rectangle (1,1);
          \draw[dashed, thick] (0,0) circle (1);
          \fill[gray!55] (0,0) circle (0.85);
          \draw[thick] (0,0) circle (0.85);
          \draw[<->, thick] (0.85,0) -- (1,0);
          \node[fill=white, inner sep=1pt] at (1.16,0) {\scriptsize $g\!\to\!0$};
        \end{tikzpicture}
        \caption{Close packing ($\kappa\uparrow1/2$): ball $\not\approx$ cell,
        thin annulus, large lost fluid.}
        \label{fig:ball_large}
    \end{subfigure}
    \caption{Why the inscribed-ball construction is faithful for dilute and
    proportional holes but fails at close packing
    (Remark~\ref{rmk:close_packing}). In each cell the restriction/Poincar\'e
    correction is built on the \emph{annulus} (blue) between the obstacle (gray,
    scale $a_\eps$) and the \emph{ball inscribed in the square cell} (dashed,
    scale $\eps$); the four corner regions (red) are the fluid that the ball
    approximation of the cell discards, and $g=\tfrac12(1-2\kappa)\eps$ is the
    gap from the obstacle to the cell wall (half a throat).
    \textbf{(a)} When $\kappa$ is bounded away from $1/2$ the annulus is fat and
    the discarded corners are a lower-order perturbation, so the ball is
    comparable to the cell and the annular (logarithmic) capacity controls the
    constant. \textbf{(b)} As $\kappa\uparrow1/2$ the obstacle nearly fills the
    inscribed ball: the controlled annulus collapses to a razor-thin ring while
    the discarded corners and the edge throats of width $(1-2\kappa)\eps$ carry
    the remaining fluid. The ball can no longer see the cell, $C_R$
    degenerates as $\kappa\uparrow1/2$, and the frozen factor $|\log(a_\eps/\eps)|$ no
    longer measures the degeneration.}
    \label{fig:ball_vs_cell}
\end{figure}

\paragraph{The pressure extension via de Rham's theorem.}
With the restriction operator in hand, we now build the pressure extension
by lifting a pressure $q$ from $\OmM$ to a pressure $Q$ on the full domain
$\Omega$, in a way that turns the inf-sup norm $\norm{q}_{*,\OmM}$ into a
controllable dual quantity on $\Omega$. The lifting is carried out by
duality through $R_\eps$: rather than constructing $Q$ pointwise, we
identify it via its action against gradients of test velocities. The
existence of such a $Q$ rests on a classical statement of de Rham, which
we record in the form used below.

\begin{lemma}[de Rham's theorem on a bounded Lipschitz domain;
{see \cite{Girault1986}}]
\label{lem:de_rham}
Let $U\subset\mathbb R^d$ be a bounded Lipschitz domain and let
$\ell\in H^{-1}(U)^d := \bigl(H_0^1(U)^d\bigr)'$. The following are
equivalent:
\begin{enumerate}[label=(\arabic*)]
    \item $\ell(\mathbf \Phi)=0$ for every
    $\mathbf \Phi\in H_0^1(U)^d$ with $\nabla\!\cdot\!\mathbf \Phi=0$ in $U$;
    \item there exists $Q\in L^2(U)$, unique up to an additive constant, with
    \begin{equation}
    \label{eq:de_rham_representation}
        \ell(\mathbf \Phi)
        \;=\;
        \bigl\langle \nabla Q,\,\mathbf \Phi\bigr\rangle_U
        \;=\;
        -\int_U Q\,\nabla\!\cdot\!\mathbf \Phi\,dx,
        \qquad
        \forall\,\mathbf \Phi\in H_0^1(U)^d.
    \end{equation}
\end{enumerate}
\end{lemma}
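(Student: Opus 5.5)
The statement to prove is de Rham's theorem (Lemma~\ref{lem:de_rham}), a classical result. I will reduce it to the surjectivity of the divergence on a bounded Lipschitz domain (the Ne\v{c}as/Bogovski\u{\i} inequality) and a closed-range argument.

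\emph{Easy direction, (2)$\Rightarrow$(1).} If $\ell(\mathbf\Phi)=-\int_U Q\,\nabla\cdot\mathbf\Phi\,dx$ and $\nabla\cdot\mathbf\Phi=0$, then $\ell(\mathbf\Phi)=0$ immediately. For uniqueness up to constants, suppose $\int_U Q\,\nabla\cdot\mathbf\Phi=0$ for every $\mathbf\Phi\in H_0^1(U)^d$. Surjectivity of $\nabla\cdot:H_0^1(U)^d\to L_0^2(U)$ lets us pick $\mathbf\Phi$ with $\nabla\cdot\mathbf\Phi=Q-\fint_U Q$. This gives $\norm{Q-\fint_U Q}_{L^2(U)}^2=0$, so $Q$ is constant.

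\emph{Main direction, (1)$\Rightarrow$(2).} Let $B=\nabla\cdot:H_0^1(U)^d\to L_0^2(U)$ and $V^0=\ker B$.
\begin{enumerate}
\item Invoke the Bogovski\u{\i} construction on a bounded Lipschitz domain. $U$ is a finite union of domains star-shaped with respect to balls, so for each $g\in L_0^2(U)$ there is $\mathbf\Phi\in H_0^1(U)^d$ with $\nabla\cdot\mathbf\Phi=g$ and $\norm{\nabla\mathbf\Phi}_{L^2(U)}\le C_U\norm{g}_{L^2(U)}$. Hence $B$ is surjective with a bounded right inverse.
\item By the closed range theorem, $B^*:L_0^2(U)\to H^{-1}(U)^d$, $B^*Q(\mathbf\Phi)=\int_U Q\,\nabla\cdot\mathbf\Phi$, has closed range. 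This range equals the annihilator $(V^0)^\circ$.
\item Hypothesis (1) says exactly that $\ell\in(V^0)^\circ$. So $\ell=-B^*Q$ for some $Q\in L_0^2(U)$, which is \eqref{eq:de_rham_representation}.
\end{enumerate}

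The range step can also be done by hand. Split $H_0^1(U)^d=V^0\oplus(V^0)^\perp$, where $B|_{(V^0)^\perp}$ is an isomorphism onto $L_0^2(U)$ by step 1. The map $g\mapsto-\ell\bigl((B|_{(V^0)^\perp})^{-1}g\bigr)$ is then a bounded functional on $L_0^2(U)$. Riesz representation gives $Q$, and the identity \eqref{eq:de_rham_representation} extends from $(V^0)^\perp$ to all of $H_0^1(U)^d$ because $\ell$ vanishes on $V^0$.

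\emph{Main obstacle.} The only nontrivial input is the surjectivity estimate for the divergence on a Lipschitz domain, which is equivalent to Ne\v{c}as' inequality $\norm{Q}_{L^2}\le C\norm{\nabla Q}_{H^{-1}}$ for $Q\in L_0^2(U)$. I would import this from \cite{Girault1986} (via the Bogovski\u{\i} operator and a partition of unity over star-shaped pieces) rather than reprove it. Everything else is functional-analytic bookkeeping.
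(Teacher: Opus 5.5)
Your proposal is correct and follows essentially the same route as the paper: the Bogovski\u{\i} right inverse makes $\nabla\cdot:H_0^1(U)^d\to L_0^2(U)$ surjective, the closed-range theorem identifies the range of its adjoint with the annihilator of the divergence-free fields, and hypothesis (1) places $\ell$ in that annihilator. Two of your steps differ only in detail from the paper's one-line remarks. You prove uniqueness by testing with a field whose divergence is $Q$ minus its mean, and you give an explicit Riesz-representation variant of the range step; both are valid.
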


\begin{proof}
For completeness we include a simple proof tailored to our setting.
The implication (2)$\Rightarrow$(1) is immediate, since for any
divergence-free $\mathbf\Phi$,
\[
    \langle\nabla Q,\mathbf\Phi\rangle_U
    =-\int_U Q\,\nabla\!\cdot\!\mathbf\Phi\,dx=0 .
\]
For (1)$\Rightarrow$(2), consider the divergence operator and its adjoint,
\[
    \operatorname{div}:H_0^1(U)^d\to L_0^2(U),
    \qquad
    (\operatorname{div})^*=-\nabla:L_0^2(U)\to H^{-1}(U)^d ,
\]
whose kernel
$\ker(\operatorname{div})=\{\mathbf\Phi\in H_0^1(U)^d:\nabla\!\cdot\!\mathbf\Phi=0\}$
is the closed subspace of divergence-free fields. Condition~(1) says exactly
that $\ell$ annihilates $\ker(\operatorname{div})$, i.e.
$\ell\in\ker(\operatorname{div})^\perp$. On a bounded Lipschitz domain the
Bogovskii operator \cite{Galdi2011} furnishes a bounded
right inverse of $\operatorname{div}$; hence $\operatorname{div}$ is surjective
with closed range, equivalently the Ne\v{c}as inequality
\[
    \norm{Q}_{L_0^2(U)}\le C\,\norm{\nabla Q}_{H^{-1}(U)^d}
\]
holds. By the closed-range theorem the adjoint $\nabla$ then also has closed
range, and
\[
    \operatorname{Range}(\nabla)=\ker(\operatorname{div})^\perp .
\]
Therefore $\ell\in\ker(\operatorname{div})^\perp$ is represented as
$\ell=\nabla Q$ for some $Q\in L_0^2(U)$, which is exactly
\eqref{eq:de_rham_representation}; uniqueness up to a constant follows from
$\nabla Q=0\Rightarrow Q$ constant. On the filled box $U=\Omega$ used below the
surjectivity of $\operatorname{div}$ (equivalently the inf-sup/Ne\v{c}as input)
is classical, with a constant depending only on the dimension and the aspect
ratio of $\Omega$, so the construction carries no $\eps$-dependence.
\end{proof}

To use Lemma~\ref{lem:de_rham} we choose $U=\Omega$ (the filled domain) and we
build, from each $q\in L_0^2(\OmM)$, a functional on $H_0^1(\Omega)^d$ that
encodes how a genuine extension of $q$ \emph{ought} to pair against test
velocities. The natural choice is
\begin{equation}
\label{eq:pressure_extension_functional}
    \ell_q(\mathbf \Phi)
    \;:=\;
    -\int_{\OmM} q\,\nabla\!\cdot\!(R_\eps\mathbf \Phi)\,dx,
    \qquad
    \mathbf \Phi\in H_0^1(\Omega)^d.
\end{equation}
The reason for this definition is the following. If a function
$Q\in L^2(\Omega)$ were to extend $q$ from $\OmM$ in the
integration-by-parts sense, then for every $\mathbf v\in H_0^1(\OmM)^d$ one
would have
\(
    \langle\nabla Q,\widetilde{\mathbf v}\rangle_\Omega
    =-\int_\Omega Q\,\nabla\!\cdot\!\widetilde{\mathbf v}\,dx
    =-\int_{\OmM} q\,\nabla\!\cdot\!\mathbf v\,dx,
\)
where $\widetilde{\mathbf v}$ is the zero extension of $\mathbf v$ to
$\Omega$. The restriction operator
$R_\eps:H_0^1(\Omega)^d\to H_0^1(\OmM)^d$ provided by
Proposition~\ref{prop:restriction_operator_lu_allaire} is precisely the
device that turns a generic test velocity $\mathbf \Phi$ on $\Omega$ into an
admissible velocity on $\OmM$, so
\eqref{eq:pressure_extension_functional} is the pairing one would expect from
a true extension of $q$.

Two properties of $\ell_q$ make Lemma~\ref{lem:de_rham} applicable on
$U=\Omega$:

\smallskip
\noindent\textbf{(i) $\ell_q\in H^{-1}(\Omega)^d$, with a quantitative bound at the
$K$-functional scale.} The inf-sup definition of $\norm{\cdot}_{*,\OmM}$ and
the restriction estimate \eqref{eq:restriction_operator_estimate} give, for
every $\mathbf \Phi\in H_0^1(\Omega)^d$,
\begin{equation}
\label{eq:ell_q_bound}
    |\ell_q(\mathbf \Phi)|
    \;\le\;
    \norm{q}_{*,\OmM}\,\norm{\nabla R_\eps\mathbf \Phi}_{L^2(\OmM)}
    \;\le\;
    C_R\,\norm{q}_{*,\OmM}
    \Bigl(
        \norm{\nabla \mathbf \Phi}_{L^2(\Omega)}
        +\sigma_\eps^{-1}\norm{\mathbf \Phi}_{L^2(\Omega)}
    \Bigr).
\end{equation}

\smallskip
\noindent\textbf{(ii) $\ell_q$ vanishes on divergence-free test fields.}
If $\mathbf \Phi\in H_0^1(\Omega)^d$ satisfies $\nabla\!\cdot\!\mathbf \Phi=0$
in $\Omega$, then property~(2) of
Proposition~\ref{prop:restriction_operator_lu_allaire} gives
$\nabla\!\cdot\!(R_\eps\mathbf \Phi)=0$ in $\OmM$, so
\eqref{eq:pressure_extension_functional} yields
$\ell_q(\mathbf \Phi)=0$. This is condition~(1) of Lemma~\ref{lem:de_rham}.

\smallskip
Lemma~\ref{lem:de_rham} therefore furnishes a pressure
$Q=\mathcal E_\eps q\in L^2(\Omega)$, unique up to additive constants, with
\begin{equation}
\label{eq:pressure_extension_distribution}
    \bigl\langle \nabla Q,\mathbf \Phi\bigr\rangle_\Omega
    \;=\;\ell_q(\mathbf \Phi),
    \qquad
    \forall \mathbf \Phi\in H_0^1(\Omega)^d.
\end{equation}
We call $\mathcal E_\eps q$ the \emph{(dual) pressure extension} of $q$.
By construction, the map $\mathcal E_\eps : L_0^2(\OmM)\to L_0^2(\Omega)$
is linear.


\begin{lemma}[Consistency of the pressure extension]
\label{lem:pressure_extension_consistency}
For every $q\in L_0^2(\OmM)$, the pressure extension
$Q=\mathcal E_\eps q$ satisfies $Q=q+c_\eps$ in $\OmM$ for some
$c_\eps\in\mathbb R$. Normalizing by $\int_{\OmM}Q\,dx=0$ gives
$Q=q$ in $\OmM$.
\end{lemma}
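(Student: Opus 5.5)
The plan is to test the defining identity \eqref{eq:pressure_extension_distribution} against velocities supported in the fluid region, and to use property~(1) of Proposition~\ref{prop:restriction_operator_lu_allaire} to identify $\nabla Q$ with $\nabla q$ there.

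Fix $\mathbf v\in H_0^1(\OmM)^d$ and let $\widetilde{\mathbf v}$ be its zero extension to $\Omega$. Since $\OmM$ is a Lipschitz domain and $\mathbf v$ has zero trace on $\partial\OmM$, we have $\widetilde{\mathbf v}\in H_0^1(\Omega)^d$ with $\nabla\cdot\widetilde{\mathbf v}=\widetilde{\nabla\cdot\mathbf v}$, the zero extension of $\nabla\cdot\mathbf v$. Inserting $\mathbf\Phi=\widetilde{\mathbf v}$ into \eqref{eq:pressure_extension_distribution} and \eqref{eq:pressure_extension_functional}, and using $R_\eps\widetilde{\mathbf v}=\mathbf v$ in $\OmM$, gives
\[
-\int_{\OmM} Q\,\nabla\cdot\mathbf v\,dx=-\int_\Omega Q\,\nabla\cdot\widetilde{\mathbf v}\,dx=\ell_q(\widetilde{\mathbf v})=-\int_{\OmM} q\,\nabla\cdot\mathbf v\,dx .
\]
Setting $w:=(Q|_{\OmM}-q)\in L^2(\OmM)$, this says $\int_{\OmM} w\,\nabla\cdot\mathbf v\,dx=0$ for all $\mathbf v\in H_0^1(\OmM)^d$. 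In particular it holds for all $\mathbf v\in C_c^\infty(\OmM)^d$, so $\nabla w=0$ in the sense of distributions on $\OmM$.

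Since $\OmM$ is connected, which is guaranteed by \textbf{(G1)}--\textbf{(G2)}, $w$ equals a constant $c_\eps$ a.e. This is the classical fact that a distribution with vanishing gradient on a connected open set is constant, and it can be checked by mollification on balls and a chaining argument. Hence $Q=q+c_\eps$ in $\OmM$.

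For the normalization, $Q$ is determined only up to an additive constant. Replacing $Q$ by $Q-\fint_{\OmM}Q$ changes neither \eqref{eq:pressure_extension_distribution} nor the previous conclusion. Because $\int_{\OmM}q=0$, the normalized $Q$ satisfies
\[
0=\int_{\OmM}Q\,dx=\int_{\OmM}q\,dx+c_\eps|\OmM|=c_\eps|\OmM|,
\]
so $c_\eps=0$ and $Q=q$ in $\OmM$. The only point needing care is justifying that the zero extension lies in $H_0^1(\Omega)^d$ with the stated divergence, which is standard for $H_0^1$ functions. There is no genuine obstacle here: the lemma is a direct consequence of property~(1) of the restriction operator together with the connectedness of $\OmM$.
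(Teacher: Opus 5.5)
Your proof is correct. It has the same skeleton as the paper's: you test \eqref{eq:pressure_extension_distribution} with a zero extension and invoke property~(1) of Proposition~\ref{prop:restriction_operator_lu_allaire}. The two arguments differ only in the final step.

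\textbf{The paper's route.} It fixes an arbitrary $\varphi\in L_0^2(\OmM)$ and uses the Bogovskii operator on $\OmM$ to write $\varphi=\nabla\cdot\mathbf b$ with $\mathbf b\in H_0^1(\OmM)^d$. This shows that $Q-q$ is $L^2$-orthogonal to $L_0^2(\OmM)$, hence constant.

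\textbf{Your route.} You test only against $C_c^\infty(\OmM)^d$, read off $\nabla(Q-q)=0$ in the sense of distributions on $\OmM$, and invoke the connectedness of $\OmM$.

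\textbf{What each buys.} Your argument is more elementary. It needs no right inverse of the divergence on the perforated domain, and it makes explicit where connectedness is used. In the paper, connectedness is hidden inside the surjectivity of $\operatorname{div}:H_0^1(\OmM)^d\to L_0^2(\OmM)$, which would fail if $\OmM$ had several components. The paper's argument instead stays within the Hilbert-space duality already used in Lemma~\ref{lem:de_rham} and avoids distribution theory. Both are rigorous.

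One small point: the fact that the zero extension of $\mathbf v\in H_0^1(\OmM)^d$ lies in $H_0^1(\Omega)^d$, with divergence equal to the zero extension of $\nabla\cdot\mathbf v$, holds for any open subset. It follows from density of $C_c^\infty(\OmM)^d$, so your appeal to the Lipschitz property there is unnecessary.
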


\begin{proof}
By Lemma~\ref{lem:de_rham}, $Q\in L^2(\Omega)$ up to an additive constant, so
$Q|_{\OmM}\in L^2(\OmM)$. Let
$\varphi\in L_0^2(\OmM)$ be arbitrary. Since $\OmM$ is a bounded
Lipschitz open set, the Bogovskii operator on $\OmM$
\cite{Galdi2011,infsup_perforation} provides
$\mathbf b\in H_0^1(\OmM)^d$ with $\nabla\cdot\mathbf b=\varphi$ in
$\OmM$; the $\eps$-dependence of the Bogovskii constant is irrelevant
here because $\mathbf b$ only plays the role of a test function. Let
$\widetilde{\mathbf b}\in H_0^1(\Omega)^d$ be the zero extension of
$\mathbf b$ to $\Omega$; by property~(1) of
Proposition~\ref{prop:restriction_operator_lu_allaire},
$R_\eps\widetilde{\mathbf b}=\mathbf b$ in $\OmM$. Using the definition
of $Q$,
\[
\begin{aligned}
    \int_{\OmM} Q\,\varphi\,dx
    &=\int_\Omega Q\,\nabla\cdot\widetilde{\mathbf b}\,dx
    =-\bigl\langle\nabla Q,\widetilde{\mathbf b}\bigr\rangle_\Omega
    =-\ell_q(\widetilde{\mathbf b})\\
    &=\int_{\OmM}q\,\nabla\cdot(R_\eps\widetilde{\mathbf b})\,dx
    =\int_{\OmM}q\,\nabla\cdot\mathbf b\,dx
    =\int_{\OmM}q\,\varphi\,dx.
\end{aligned}
\]
Hence $\int_{\OmM}(Q-q)\varphi\,dx=0$ for all
$\varphi\in L_0^2(\OmM)$, so $Q-q$ is constant on $\OmM$. Fixing
the constant by $\int_{\OmM}Q\,dx=0$ gives $Q=q$ in $\OmM$.
\end{proof}

\subsection{Proof of the lower bound}\label{sec:lower_bound_proof}

The pressure extension $\mathcal E_\eps q$ constructed in
Section~\ref{subsec:restriction_extension} so far satisfies only the
qualitative identity \eqref{eq:pressure_extension_distribution}; we now turn
it into a quantitative bound at the $K$-functional scale. The right tool is
a fixed-domain form of the Ne\v{c}as inequality on $\Omega$.

\begin{lemma}[Weighted Ne\v{c}as inequality]
\label{lem:weighted_necas_K}
Let $\Omega\subset\mathbb R^d$ be a fixed bounded Lipschitz domain. There exists
$C_N>0$, depending only on $\Omega$ and $d$, such that for every $0<\sigma\le 1$
and every $Q\in L^2(\Omega)$ (understood modulo additive constants),
\begin{equation}
\label{eq:weighted_necas_K}
    \norm{Q}_{K_\sigma(\Omega)}
    \le
    C_N
    \sup_{\mathbf \Phi\in H_0^1(\Omega)^d\setminus\{\mathbf 0\}}
    \frac{
        \bigl|\bigl\langle \nabla Q,\mathbf \Phi\bigr\rangle_\Omega\bigr|
    }{
        \norm{\nabla \mathbf \Phi}_{L^2(\Omega)}
        +\sigma^{-1}\norm{\mathbf \Phi}_{L^2(\Omega)}
    }.
\end{equation}
\end{lemma}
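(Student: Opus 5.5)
The plan is to prove \eqref{eq:weighted_necas_K} by duality. The right-hand side is the dual norm of $\nabla Q$ with respect to the space $X_\sigma:=H_0^1(\Omega)^d$ equipped with the norm $\norm{\nabla\mathbf\Phi}+\sigma^{-1}\norm{\mathbf\Phi}$. The left-hand side is a $K$-functional for the pair $(L^2,H^1)$ modulo constants. We construct an explicit decomposition $Q-\bar Q=(Q-\phi)+\phi$ whose pieces are controlled by that dual norm. Write $N_\sigma(Q)$ for the supremum on the right. Since the norm on $X_\sigma$ is equivalent, up to a factor $2$, to the Hilbert norm $(\norm{\nabla\mathbf\Phi}^2+\sigma^{-2}\norm{\mathbf\Phi}^2)^{1/2}$, we may work with the Riesz representer. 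Let $\mathbf w\in H_0^1(\Omega)^d$ solve the vector Brinkman problem
\[
(\nabla\mathbf w,\nabla\mathbf\Phi)+\sigma^{-2}(\mathbf w,\mathbf\Phi)=\langle\nabla Q,\mathbf\Phi\rangle_\Omega\quad\forall\mathbf\Phi\in H_0^1(\Omega)^d .
\]
Then $\norm{\nabla\mathbf w}^2+\sigma^{-2}\norm{\mathbf w}^2\le 2N_\sigma(Q)^2$.

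\textbf{Step 1 (the candidate smooth part).} This weak form says $\nabla Q=-\Delta\mathbf w+\sigma^{-2}\mathbf w$ in $H^{-1}$. We want to split $Q=Q_1+Q_2$, where $\nabla Q_2$ relates to $\sigma^{-2}\mathbf w$ and $Q_1$ relates to the distribution $-\Delta\mathbf w$, which is $H^{-1}$-small. Since $\sigma^{-2}\mathbf w$ need not be a gradient, we use the Helmholtz-type construction instead. Let $\phi\in H^1(\Omega)\cap L^2_0(\Omega)$ solve the Neumann problem $(\nabla\phi,\nabla\psi)=\sigma^{-2}(\mathbf w,\nabla\psi)$ for all $\psi\in H^1(\Omega)$. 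Then $\sigma\norm{\nabla\phi}\le\sigma^{-1}\norm{\mathbf w}\lesssim N_\sigma(Q)$, which gives the $\sigma H^1$ part.

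\textbf{Step 2 (the rough part).} It remains to bound $\norm{Q-\bar Q-\phi}_{L^2}$. Here we use the classical (unweighted) Ne\v{c}as inequality $\norm{r}_{L_0^2}\le C\norm{\nabla r}_{H^{-1}}$ on the fixed domain $\Omega$, available from the proof of Lemma~\ref{lem:de_rham}, applied to $r=Q-\phi$. For $\mathbf\Phi\in H_0^1$,
\[
\langle\nabla(Q-\phi),\mathbf\Phi\rangle=(\nabla\mathbf w,\nabla\mathbf\Phi)+\sigma^{-2}(\mathbf w-\nabla\phi,\mathbf\Phi).
\]
The first term is bounded by $\sqrt2 N_\sigma\norm{\nabla\mathbf\Phi}$. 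The second is the obstacle. The field $\mathbf z:=\mathbf w-\nabla\phi$ is $L^2$-orthogonal to gradients, i.e.\ divergence-free with zero normal trace, but $\sigma^{-2}\norm{\mathbf z}\norm{\mathbf\Phi}$ is only $O(\sigma^{-1}N_\sigma)$ in $H^{-1}$. We therefore exploit the structure: for a solenoidal $\mathbf z$, write $\mathbf\Phi=\nabla\chi+\mathbf\Phi_0$, where $\mathbf\Phi_0$ is the Leray part. The term $(\mathbf z,\nabla\chi)$ vanishes, and $(\mathbf z,\mathbf\Phi_0)$ pairs against the divergence-free part only.

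The cleaner route, and the one I will follow, is to avoid $\mathbf\Phi$ arbitrary. Note that $\langle\nabla(Q-\phi),\mathbf\Phi\rangle$ depends only on $\nabla\!\cdot\mathbf\Phi$. So by Bogovskii we may restrict to $\mathbf\Phi=\mathcal B g$ with $g\in L_0^2$ and $\norm{\nabla\mathcal B g}\le C\norm g$. To handle the $\sigma^{-2}$ term for such $\mathbf\Phi$, we replace $\mathcal B$ by a \emph{$\sigma$-localized} Bogovskii right inverse $\mathcal B_\sigma$, built with a partition of unity at scale $\sigma$ (cover $\Omega$ by star-shaped patches of size $\sigma$, plus a coarse correction to transfer means between patches). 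It satisfies
\[
\norm{\nabla\mathcal B_\sigma(g-\Pi_\sigma g)}+\sigma^{-1}\norm{\mathcal B_\sigma(g-\Pi_\sigma g)}\le C\norm g ,
\]
where $\Pi_\sigma$ is the patchwise mean projection.

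\textbf{Step 3 (assembly).} This localized operator allows a direct decomposition, bypassing $\mathbf w$. Set $\tilde q:=$ a smoothed version of $\Pi_\sigma Q$, obtained by quasi-interpolation of the patch means onto continuous piecewise-linears at scale $\sigma$. Testing with $\mathbf\Phi=\mathcal B_\sigma(g-\Pi_\sigma g)$, $g=Q-\Pi_\sigma Q$, gives $\norm{Q-\Pi_\sigma Q}\le C N_\sigma(Q)$. Differences of neighbouring patch means are controlled the same way, using Bogovskii fields on the union of two adjacent patches, each with norm $\lesssim\sigma^{-d/2}\cdot(\sigma^{-1}\cdot\sigma)$. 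Summing yields $\sigma\norm{\nabla\tilde q}\lesssim N_\sigma(Q)$ and $\norm{\Pi_\sigma Q-\tilde q}\lesssim N_\sigma(Q)$. Finally, the global mean fluctuation of $\tilde q$ is absorbed modulo constants by the classical Ne\v{c}as inequality on $\Omega$.

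The main obstacle is Step 3's uniform construction near $\partial\Omega$ with constants independent of $\sigma$. For a box, the patches can be taken as $\sigma$-cubes, uniformly star-shaped, which settles it. For general Lipschitz $\Omega$ I would use a Whitney-type boundary-adapted covering.
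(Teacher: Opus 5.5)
Your final argument (the cellwise Bogovskii fields of Step~2 and the assembly of Step~3) is correct, but it takes a genuinely different route from the paper.

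\textbf{The paper's route.} The paper argues abstractly. It takes the classical Ne\v{c}as inequality $\norm{R}_{L^2_0}\le C\norm{\nabla R}_{H^{-1}}$ and the identity $\norm{R}_{\dot H^1}=\norm{\nabla R}_{L^2}$ as endpoint estimates for the gradient map. It interpolates them by the $K$-method at parameter $\sigma$ to get $\norm{Q}_{L^2_0+\sigma\dot H^1}\lesssim\norm{\nabla Q}_{H^{-1}+\sigma L^2}$. It then identifies $H^{-1}+\sigma L^2$ with the dual of $H_0^1\cap\sigma^{-1}L^2$, which is your $N_\sigma(Q)$. This takes a few lines and covers every bounded Lipschitz $\Omega$.

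It does, however, tacitly need a single left inverse of $\nabla$ that is bounded both $H^{-1}\to L^2_0$ and $L^2\to\dot H^1$. The Neumann/Helmholtz left inverse behind your Steps~1--2 is bounded on $L^2$ but not from $H^{-1}$ to $L^2_0$; that is exactly the boundary obstruction you ran into. The adjoint of a Bogovskii operator that also maps $(H^1)'$-data into $L^2$ does the job.

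\textbf{What your route buys and costs.} Your proof sidesteps this issue. The competitor $\tilde q$ is built explicitly from cell means at scale $\sigma$. The only analytic inputs are scale-invariant Bogovskii bounds on cells and on pairs of adjacent cells, plus Friedrichs' inequality at scale $\sigma$. Ne\v{c}as is needed only on reference cells, never on $\Omega$, and the constants are explicit.

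The price is geometry. Your proof is complete for the box, which is the only $\Omega$ the paper uses. For a general Lipschitz domain you still owe a partition into cells of diameter $\asymp\sigma$ all the way up to $\partial\Omega$, with uniform Bogovskii constants on cells and on adjacent pairs. Images of $\sigma$-cubes under local bi-Lipschitz flattening charts would serve. A Whitney decomposition does not fit your scheme as written. On cells of size $\ell\ll\sigma$ the pair fields control only $\sum\ell^d|\bar Q_j-\bar Q_k|^2$. The quasi-interpolant on those cells, however, has $\sigma^2\norm{\nabla\tilde q}^2\sim\sum\sigma^2\ell^{d-2}|\bar Q_j-\bar Q_k|^2$.

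\textbf{Points to tighten.}
\begin{enumerate}
\item Let $\mathbf v_{jk}\in H_0^1(\omega_j\cup\omega_k)^d$ have divergence $\chi_{\omega_j}/|\omega_j|-\chi_{\omega_k}/|\omega_k|$. The neighbour-mean step must test with one combined field, $\mathbf\Phi=\sum_{j\sim k}\sigma^d(\bar Q_j-\bar Q_k)\mathbf v_{jk}$. Bounded overlap of the pair supports then gives $\norm{\nabla\mathbf\Phi}+\sigma^{-1}\norm{\mathbf\Phi}\lesssim\bigl(\sigma^d\sum_{j\sim k}|\bar Q_j-\bar Q_k|^2\bigr)^{1/2}$. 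Summing the individual bounds $|\bar Q_j-\bar Q_k|\lesssim\sigma^{-d/2}N_\sigma(Q)$ instead would lose a factor $\sigma^{-d/2}$.
\item The cells defining $\Pi_\sigma$ should be disjoint, so that $(\Pi_\sigma Q,Q-\Pi_\sigma Q)=0$.
\item No coarse correction is needed inside $\mathcal B_\sigma$, since it only acts on cellwise mean-zero data.
\item The closing appeal to the global Ne\v{c}as inequality is superfluous. $\norm{\cdot}_{K_\sigma(\Omega)}$ is invariant under constants and $\tilde q$ is an admissible competitor, so $\norm{Q}_{K_\sigma(\Omega)}\le\norm{Q-\tilde q}+\sigma\norm{\nabla\tilde q}\lesssim N_\sigma(Q)$ directly.
\item The Brinkman field $\mathbf w$ of Steps~1--2 can be dropped altogether.
\end{enumerate}
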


\begin{proof}
Recall from Section~\ref{sec:problem} the zero-mean spaces $L_0^2(\Omega)$ and
$\dot H^1(\Omega)=H^1(\Omega)\cap L_0^2(\Omega)$, with
$\norm{R}_{L_0^2}=\norm{R}_{L^2}$ and
$\norm{R}_{\dot H^1}=\norm{\nabla R}_{L^2}$. The classical Ne\v{c}as inequality
on $\Omega$ \cite{Girault1986} gives
\begin{equation}
\label{eq:necas_classic}
    \norm{R}_{L_0^2(\Omega)}
    \le
    C_{\mathrm{N}}^{(0)}\,\norm{\nabla R}_{H^{-1}(\Omega)^d},
    \qquad R\in L_0^2(\Omega),
\end{equation}
while the endpoint identity $\norm{R}_{\dot H^1(\Omega)}=\norm{\nabla R}_{L^2(\Omega)^d}$ is immediate.
The gradient map $G:R\mapsto\nabla R$ is therefore boundedly invertible on its
range at the two endpoints
\[
    G:L_0^2(\Omega)\to H^{-1}(\Omega)^d
    \ \text{with norm $C_{\mathrm{N}}^{(0)}$},
    \qquad
    G:\dot H^1(\Omega)\to L^2(\Omega)^d
    \ \text{with norm $1$}.
\]
By the real interpolation $K$-method applied to both sides of $G$, using the
$(0,1)$-parameter
$\theta=1/2$ with weight $\sigma$ (so that
$(X_0,X_1)_{K,\sigma}=X_0+\sigma X_1$ with the $K$-norm \eqref{eq:K_sigma_norm_def}),
we obtain from \cite{berghLofstrom1976}
\begin{equation}
\label{eq:K_functional_interpolation_step}
    \norm{Q}_{L_0^2(\Omega)+\sigma\,\dot H^1(\Omega)}
    \le
    \max\bigl(C_{\mathrm{N}}^{(0)},1\bigr)\,
    \norm{\nabla Q}_{H^{-1}(\Omega)^d+\sigma L^2(\Omega)^d}
    \quad\forall\,Q\in L_0^2(\Omega).
\end{equation}
The left-hand side of \eqref{eq:K_functional_interpolation_step} is exactly
$\norm{Q}_{K_\sigma(\Omega)}$ modulo constants. For the right-hand side, the
classical sum-intersection duality
\cite{berghLofstrom1976} gives
\[
    H^{-1}(\Omega)^d+\sigma L^2(\Omega)^d
    =
    \bigl(H_0^1(\Omega)^d\cap \sigma^{-1}L^2(\Omega)^d\bigr)'
\]
with equivalent norms, so that
\begin{equation}
\label{eq:necas_dual_step}
    \norm{\nabla Q}_{H^{-1}+\sigma L^2}
    =
    \sup_{\mathbf \Phi\in H_0^1(\Omega)^d\setminus\{\mathbf 0\}}
    \frac{
        \bigl|\bigl\langle \nabla Q,\mathbf \Phi\bigr\rangle_\Omega\bigr|
    }{
        \norm{\nabla \mathbf \Phi}_{L^2(\Omega)}
        +\sigma^{-1}\norm{\mathbf \Phi}_{L^2(\Omega)}
    }.
\end{equation}
Combining \eqref{eq:K_functional_interpolation_step} and
\eqref{eq:necas_dual_step} yields \eqref{eq:weighted_necas_K} with
$C_N=\max(C_{\mathrm{N}}^{(0)},1)$.
\end{proof}

\begin{remark}[Brinkman interpretation]
\label{rem:brinkman}
The weighted Ne\v{c}as inequality \eqref{eq:weighted_necas_K} can be read as
the dual side of the inf-sup condition for the Brinkman problem
\(
    -\mu\Delta\mathbf u + \mu\sigma^{-2}\mathbf u + \nabla p = \mathbf f
\)
in $\mathbf{u}|_{\partial\Omega}=\mathbf 0$ on the full domain $\Omega$,
at the homogenized permeability $\sigma^2$. The combination
$\norm{\nabla\mathbf\Phi}_{L^2(\Omega)}+\sigma^{-1}\norm{\mathbf\Phi}_{L^2(\Omega)}$
appearing in the denominator is precisely the energy norm of the Brinkman
velocity space (up to a $\mu$-rescaling). The pressure extension
$\mathcal E_\eps$ of Section~\ref{subsec:restriction_extension} is what
converts a Stokes pressure on the perforated domain $\OmM$ into a Brinkman
pressure on the filled domain $\Omega$ at scale $\sigma_\eps$, so that the
perforation-uniform inf-sup of Stokes on $\OmM$ is captured by a
fixed-domain estimate on $\Omega$.
\end{remark}

Combining the weighted Ne\v{c}as inequality with the $K$-functional dual
bound \eqref{eq:ell_q_bound} on $\ell_q$ now produces the quantitative
control of the extension that drives the lower bound.

\begin{lemma}[Boundedness of the pressure extension]
\label{lem:pressure_extension_K_bound}
There exists $C>0$, independent of $\eps$, such that
\begin{equation}
\label{eq:pressure_extension_K_bound}
    \norm{\mathcal E_\eps q}_{K_{\sigma_\eps}(\Omega)}
    \le
    C\,\norm{q}_{*,\OmM},
    \qquad
    \forall q\in L_0^2(\OmM).
\end{equation}
\end{lemma}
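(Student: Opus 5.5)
The plan is to chain the weighted Ne\v{c}as inequality of Lemma~\ref{lem:weighted_necas_K} with the dual bound \eqref{eq:ell_q_bound} on $\ell_q$. Fix $q\in L_0^2(\OmM)$ and let $Q=\mathcal E_\eps q$, regarded modulo constants, since $\norm{\cdot}_{K_{\sigma_\eps}(\Omega)}$ is a seminorm that vanishes on constants. Apply Lemma~\ref{lem:weighted_necas_K} with $\sigma=\sigma_\eps$. This is admissible because $\sigma_\eps\asymp\eps\to0$, so $\sigma_\eps\le1$ for all sufficiently small $\eps$. For the finitely many remaining $\eps$ (if any), one either rescales, noting that the sup-ratio changes only by a factor $\max(1,\sigma_\eps)$, which is bounded, or simply enlarges the constant. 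The lemma gives
\[
\norm{Q}_{K_{\sigma_\eps}(\Omega)}\le C_N\sup_{\mathbf\Phi\ne\mathbf0}\frac{|\langle\nabla Q,\mathbf\Phi\rangle_\Omega|}{\norm{\nabla\mathbf\Phi}_{L^2(\Omega)}+\sigma_\eps^{-1}\norm{\mathbf\Phi}_{L^2(\Omega)}}.
\]

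Next, replace the numerator using the defining identity \eqref{eq:pressure_extension_distribution}, namely $\langle\nabla Q,\mathbf\Phi\rangle_\Omega=\ell_q(\mathbf\Phi)$ for every $\mathbf\Phi\in H_0^1(\Omega)^d$. Then invoke \eqref{eq:ell_q_bound}, which bounds $|\ell_q(\mathbf\Phi)|$ by $C_R\norm{q}_{*,\OmM}$ times exactly the Brinkman-type denominator $\norm{\nabla\mathbf\Phi}_{L^2(\Omega)}+\sigma_\eps^{-1}\norm{\mathbf\Phi}_{L^2(\Omega)}$. Each ratio inside the supremum is therefore at most $C_R\norm{q}_{*,\OmM}$. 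Taking the supremum yields \eqref{eq:pressure_extension_K_bound} with $C=C_NC_R$. This constant is independent of $\eps$, because $C_N$ depends only on the fixed domain $\Omega$ and on $d$, and $C_R$ depends only on the reference-cell geometry by Proposition~\ref{prop:restriction_operator_lu_allaire}.

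The only delicate point is bookkeeping rather than analysis. The denominator in Lemma~\ref{lem:weighted_necas_K} must match the one produced by the restriction estimate \eqref{eq:restriction_operator_estimate}, using the same $\sigma_\eps$ and the sum rather than the Euclidean combination of the two terms; it does, by construction. We must also check that the additive-constant ambiguity in $\mathcal E_\eps q$ is harmless. Here we use that Lemma~\ref{lem:weighted_necas_K} is stated for $Q$ modulo constants, and that the $K$-seminorm of a constant is zero, since we may take $\phi$ equal to that constant. Consequently, any normalization of $Q$, including the one $\int_{\OmM}Q=0$ used in Lemma~\ref{lem:pressure_extension_consistency}, gives the same left-hand side. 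Finally, linearity of $\mathcal E_\eps$ is not needed for the estimate itself, only the pointwise-in-$q$ chain above.
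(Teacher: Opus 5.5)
Your proposal is correct and follows the paper's argument exactly: you apply Lemma~\ref{lem:weighted_necas_K} with $\sigma=\sigma_\eps$, substitute $\langle\nabla Q,\mathbf\Phi\rangle_\Omega=\ell_q(\mathbf\Phi)$ from \eqref{eq:pressure_extension_distribution}, and bound each ratio by \eqref{eq:ell_q_bound}, which gives $C=C_NC_R$. Your extra bookkeeping on the additive constant and on the hypothesis $\sigma\le1$ (which the paper uses without comment) is sound; calling the leftover values of $\eps$ ``finitely many'' is loose, but your $\max(1,\sigma_\eps)$ rescaling covers any range on which $\sigma_\eps$ stays bounded.
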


\begin{proof}
Let $Q=\mathcal E_\eps q$. Applying Lemma~\ref{lem:weighted_necas_K} with
$\sigma=\sigma_\eps$ and using \eqref{eq:pressure_extension_distribution},
\[
    \norm{Q}_{K_{\sigma_\eps}(\Omega)}
    \le
    C_N
    \sup_{\mathbf \Phi\ne\mathbf 0}
    \frac{|\ell_q(\mathbf \Phi)|}
         {\norm{\nabla\mathbf \Phi}_{L^2(\Omega)}+\sigma_\eps^{-1}\norm{\mathbf \Phi}_{L^2(\Omega)}}
    \le
    C_N C_R\,\norm{q}_{*,\OmM},
\]
where the second inequality is \eqref{eq:ell_q_bound}.
\end{proof}

It remains to push the bound back from $\Omega$ to $\OmM$.

\begin{proof}[Proof of Theorem~\ref{thm:main}, lower bound]
Let $q\in L_0^2(\OmM)=Q(\OmM)$ and let $Q=\mathcal E_\eps q$ be the
pressure extension, normalized by
$\int_{\OmM}Q\,dx=0$ so that $Q=q$ in $\OmM$ by
Lemma~\ref{lem:pressure_extension_consistency}.

For any $\Phi\in H^1(\Omega)$, define the competitor
$\phi:=\Phi|_{\OmM}\in H^1(\OmM)$. Since $Q=q$ on $\OmM$
and restriction reduces the $L^2$ norm,
\[
    \norm{q-\phi}_{L^2(\OmM)}^2
    +\sigma_\eps^2\norm{\nabla\phi}_{L^2(\OmM)}^2
    \le
    \norm{Q-\Phi}_{L^2(\Omega)}^2
    +\sigma_\eps^2\norm{\nabla\Phi}_{L^2(\Omega)}^2.
\]
Taking the infimum over $\Phi\in H^1(\Omega)$,
\[
    \norm{q}_{K_{\sigma_\eps}(\OmM)}
    \le
    \norm{Q}_{K_{\sigma_\eps}(\Omega)}
    \le
    C_N C_R\,\norm{q}_{*,\OmM},
\]
where the second inequality is Lemma~\ref{lem:pressure_extension_K_bound}.
Recalling $\norm{\cdot}_{K_{\sigma_\eps}(\OmM)}=\norm{\cdot}_{L^2+\sigma_\eps H^1(\OmM)}$,
we conclude
\[
    \norm{q}_{L^2+\sigma_\eps H^1(\OmM)}
    \le C_L\,\norm{q}_{*,\OmM},
    \qquad
    C_L:=C_N C_R.
\]
This completes the proof of the theorem.
\end{proof}

\subsection{From the continuous to the discrete norm equivalence}\label{subsec:fortin}

Theorem~\ref{thm:main} is a statement about the continuous norms $\norm{\cdot}_{*,\OmM}$ and $\norm{\cdot}_{L^2+\sigma_\eps H^1}$. To transport it to a conforming finite element subspace $V_h\times Q_h\subset V(\OmM)\times Q(\OmM)$, we impose the following uniform Fortin condition. For classical inf-sup stable pairs (Taylor--Hood $\mathbb P_k$--$\mathbb P_{k-1}$, MINI, the Bernardi--Raugel element, \dots), this is the standard Fortin property; on the perforated pillar-array geometry it is obtained by local quasi-interpolation, with constants depending on pillar shape regularity but not on $\eps$ or $h$ (see, e.g., \cite{boffi2013mixed,sande2025robust}).

\begin{assumption}[Uniform Fortin operator]\label{ass:fortin}
For every $\eps$ and $h$, there is a linear operator
\[
    \Pi_h^{F}:V(\OmM)\longrightarrow V_h
\]
such that, for every $q_h\in Q_h$ and $\mathbf v\in V(\OmM)$,
\begin{equation}\label{eq:fortin}
\int_{\OmM}q_h\,\nabla\cdot(\Pi_h^F\mathbf v)\,dx
 = \int_{\OmM}q_h\,\nabla\cdot\mathbf v\,dx,
\qquad
\norm{\nabla \Pi_h^F\mathbf v}_{L^2(\OmM)}\le C_F\,\norm{\nabla\mathbf v}_{L^2(\OmM)},
\end{equation}
where $C_F$ is independent of $\eps$ and $h$.
\end{assumption}

\begin{corollary}[Discrete norm equivalence]
\label{cor:discrete_equiv}
Let $V_h\times Q_h\subset V(\OmM)\times Q(\OmM)$ be a conforming finite element pair satisfying Assumption~\ref{ass:fortin}. Define the discrete inf-sup norm
\[
\norm{q_h}_{*,\OmM,h}:=\sup_{\mathbf v_h\in V_h\setminus\{\mathbf 0\}}
\frac{\int_{\OmM}q_h\,\nabla\cdot\mathbf v_h\,dx}{\norm{\nabla\mathbf v_h}_{L^2(\OmM)}}.
\]
Then there exist constants $c_*,C_*>0$ independent of $\eps$ and $h$ such that
\begin{equation}
\label{eq:discrete_equiv}
c_*\,\norm{q_h}_{L^2+\sigma_\eps H^1(\OmM)}
\;\le\;
\norm{q_h}_{*,\OmM,h}
\;\le\;
C_*\,\norm{q_h}_{L^2+\sigma_\eps H^1(\OmM)},
\qquad\forall q_h\in Q_h.
\end{equation}
\end{corollary}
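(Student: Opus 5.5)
The plan is to compare the discrete inf-sup norm with the continuous one on $Q_h$, and then invoke Theorem~\ref{thm:main}. Every $q_h\in Q_h$ lies in $Q(\OmM)$, and the $K$-functional norm $\norm{q_h}_{L^2+\sigma_\eps H^1(\OmM)}$ is the continuous one, with the infimum over all of $H^1(\OmM)$. So it suffices to prove
\[
C_F^{-1}\norm{q_h}_{*,\OmM}\le \norm{q_h}_{*,\OmM,h}\le \norm{q_h}_{*,\OmM},\qquad \forall q_h\in Q_h,
\]
and then chain these bounds with the two inequalities of Theorem~\ref{thm:main}.

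The upper inequality follows from conformity. Since $V_h\subset V(\OmM)$, the supremum defining $\norm{q_h}_{*,\OmM,h}$ runs over a subset of the test functions used for $\norm{q_h}_{*,\OmM}$, so $\norm{q_h}_{*,\OmM,h}\le\norm{q_h}_{*,\OmM}\le C\,\norm{q_h}_{L^2+\sigma_\eps H^1}$. This gives $C_*=C=C_U$.

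The lower inequality uses Assumption~\ref{ass:fortin}. Fix any $\mathbf v\in V(\OmM)$ with $\mathbf v\neq\mathbf 0$. If $\Pi_h^F\mathbf v=\mathbf 0$, the Fortin identity forces $\int_{\OmM}q_h\nabla\cdot\mathbf v\,dx=0$, so this $\mathbf v$ contributes nothing to the supremum. Otherwise, $\Pi_h^F\mathbf v$ is an admissible discrete test function, and
\[
\int_{\OmM}q_h\nabla\cdot\mathbf v\,dx=\int_{\OmM}q_h\nabla\cdot(\Pi_h^F\mathbf v)\,dx\le \norm{q_h}_{*,\OmM,h}\norm{\nabla\Pi_h^F\mathbf v}_{L^2(\OmM)}\le C_F\norm{q_h}_{*,\OmM,h}\norm{\nabla\mathbf v}_{L^2(\OmM)}.
\]
Taking the supremum over $\mathbf v$ gives $\norm{q_h}_{*,\OmM}\le C_F\norm{q_h}_{*,\OmM,h}$. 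Combined with the lower bound of Theorem~\ref{thm:main}, this yields $c\,\norm{q_h}_{L^2+\sigma_\eps H^1}\le C_F\norm{q_h}_{*,\OmM,h}$, so we may take $c_*=c/C_F=1/(C_LC_F)$.

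No step is a genuine obstacle. The only points to check are, first, that the discrete and continuous statements use the same $K$-functional (both take the infimum over the full space $H^1(\OmM)$, so no discrete $K$-functional enters) and, second, the degenerate case $\Pi_h^F\mathbf v=\mathbf 0$, which is handled above. Independence of $\eps$ and $h$ is inherited directly from $C_U$, $C_L$ and $C_F$. All of the analytical difficulty is contained in Theorem~\ref{thm:main} and in the validity of Assumption~\ref{ass:fortin} on the perforated mesh.
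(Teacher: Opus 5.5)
Your proposal is correct and follows the paper's proof: conformity $V_h\subset V(\OmM)$ gives $\norm{q_h}_{*,\OmM,h}\le\norm{q_h}_{*,\OmM}$, testing with $\Pi_h^F\mathbf v$ gives $\norm{q_h}_{*,\OmM}\le C_F\norm{q_h}_{*,\OmM,h}$, and both are then chained with Theorem~\ref{thm:main}, yielding $C_*=C_U$ and $c_*=1/(C_LC_F)$. Your explicit handling of the degenerate case $\Pi_h^F\mathbf v=\mathbf 0$ is a small detail the paper leaves implicit.
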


\begin{proof}
The upper bound follows by restricting the supremum in the continuous inf-sup norm to the discrete test space: $\norm{q_h}_{*,\OmM,h}\le\norm{q_h}_{*,\OmM}\le C\,\norm{q_h}_{L^2+\sigma_\eps H^1}$ by Theorem~\ref{thm:main}. For the lower bound, take any $\mathbf v\in V(\OmM)$ and test with $\mathbf v_h:=\Pi_h^F\mathbf v\in V_h$:
\[
\int_{\OmM}q_h\,\nabla\cdot\mathbf v\,dx
=\int_{\OmM}q_h\,\nabla\cdot\mathbf v_h\,dx
\le \norm{q_h}_{*,\OmM,h}\,\norm{\nabla\mathbf v_h}_{L^2(\OmM)}
\le C_F\,\norm{q_h}_{*,\OmM,h}\,\norm{\nabla\mathbf v}_{L^2(\OmM)}.
\]
Taking the supremum over $\mathbf v$ gives $\norm{q_h}_{*,\OmM}\le C_F\,\norm{q_h}_{*,\OmM,h}$, and Theorem~\ref{thm:main} then yields $c\,\norm{q_h}_{L^2+\sigma_\eps H^1}\le C_F\,\norm{q_h}_{*,\OmM,h}$.
\end{proof}

Corollary~\ref{cor:discrete_equiv} is the object we feed into the operator-preconditioning framework: from here on every constant is independent of $\eps$, $m$ and $h$.

\subsection{Discretized time-dependent Stokes flow}\label{subsec:time_discrete_stokes}

Theorem~\ref{thm:main} treats the steady Stokes inf-sup norm on $\OmM$. We now consider one backward-Euler step of the time-dependent Stokes equations. The same pressure-extension argument yields a norm equivalence uniform in the step size $\tau>0$. The only additional input is the $L^2$ stability of the restriction operator.

\paragraph{The time-discrete inf-sup norm.}
For a step size $\tau>0$, define on $Q(\OmM)$ the \emph{time-discrete Stokes inf-sup norm}
\begin{equation}
\label{eq:time_discrete_infsup_def}
    \norm{q}_{*,\OmM,\tau}
    \;:=\;
    \sup_{\mathbf v\in V(\OmM)\setminus\{\mathbf 0\}}
    \frac{
        \int_{\OmM} q\,\nabla\!\cdot\!\mathbf v\,dx
    }{
        \bigl(
            \norm{\nabla\mathbf v}_{L^2(\OmM)}^2
            +(\tau\mu)^{-1}\norm{\mathbf v}_{L^2(\OmM)}^2
        \bigr)^{1/2}
    }.
\end{equation}
The denominator is the velocity energy norm induced by a backward-Euler step,
\[
    \tau^{-1}\mathbf u-\mu\Delta\mathbf u+\nabla p=\mathbf f,
\]
after division by $\mu$. As $\tau\to\infty$, it reduces to the steady Stokes inf-sup norm \eqref{eq:infsup_def}.

\paragraph{An $L^2$-stable restriction operator.}
The cellwise construction of \cite{Allaire1991} supplies, in
addition to the gradient estimate \eqref{eq:restriction_operator_estimate},
the $L^2$ stability bound
\begin{equation}
\label{eq:R_L2_stability}
    \norm{R_\eps\mathbf\Phi}_{L^2(\OmM)}
    \;\le\;
    C_R^{L^2}\,\norm{\mathbf\Phi}_{L^2(\Omega)},
    \qquad
    \forall\mathbf\Phi\in H_0^1(\Omega)^d,
\end{equation}
with $C_R^{L^2}$ independent of $\eps$. Property
\eqref{eq:R_L2_stability} is essentially elementary: in each cell
$P_\eps^{i,j}$, $R_\eps\mathbf\Phi$ is built by a local correction whose
$L^2$ norm is controlled by the $L^2$ norm of $\mathbf\Phi$ on the same
cell. We adopt \eqref{eq:R_L2_stability} for the remainder of this
subsection.

\paragraph{The combined short scale.}
Set
\begin{equation}
\label{eq:combined_sigma_tau}
    \sigma_{\eps,\tau}
    \;:=\;
    \bigl(\sigma_\eps^{-2}+(\tau\mu)^{-1}\bigr)^{-1/2}.
\end{equation}
Equivalently $\sigma_{\eps,\tau}\asymp\min(\sigma_\eps,(\tau\mu)^{1/2})$. Thus, for $\tau\gg\sigma_\eps^2/\mu$, the perforation supplies the short scale, whereas for $\tau\ll\sigma_\eps^2/\mu$, the time-discretization term supplies it.

\begin{theorem}[Time-discrete Stokes norm equivalence]\label{thm:time_discrete_stokes}
Under the geometric assumptions of Section~\ref{sec:problem} and the
$L^2$ stability \eqref{eq:R_L2_stability}, there exist constants $c,C>0$
independent of $\eps$ \emph{and} of $\tau>0$ such that for every
$q\in Q(\OmM)$,
\begin{equation}
\label{eq:time_discrete_equivalence}
    c\,\norm{q}_{L^2+\sigma_{\eps,\tau} H^1}
    \;\le\;
    \norm{q}_{*,\OmM,\tau}
    \;\le\;
    C\,\norm{q}_{L^2+\sigma_{\eps,\tau} H^1}.
\end{equation}
\end{theorem}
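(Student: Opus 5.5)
The plan is to reproduce the two halves of the proof of Theorem~\ref{thm:main} with the velocity energy norm $\vertiii{\mathbf v}_\tau^2:=\norm{\nabla\mathbf v}^2+(\tau\mu)^{-1}\norm{\mathbf v}^2$ in place of $\norm{\nabla\mathbf v}$, and to track the two short scales $\sigma_\eps$ and $(\tau\mu)^{1/2}$ simultaneously. Throughout we use $\sigma_{\eps,\tau}^{-1}\asymp\sigma_\eps^{-1}+(\tau\mu)^{-1/2}$.

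\emph{Upper bound.} Fix $\tilde q\in H^1(\OmM)$ and split $q=(q-\tilde q)+\tilde q$. The first piece is handled exactly as before: $\int(q-\tilde q)\nabla\cdot\mathbf v\le\sqrt d\,\norm{q-\tilde q}\,\vertiii{\mathbf v}_\tau$. For the second piece we integrate by parts to get $\int\tilde q\,\nabla\cdot\mathbf v=-\int\nabla\tilde q\cdot\mathbf v\le\sigma_{\eps,\tau}\norm{\nabla\tilde q}\,\sigma_{\eps,\tau}^{-1}\norm{\mathbf v}$. It then suffices to show $\sigma_{\eps,\tau}^{-2}\norm{\mathbf v}^2=\sigma_\eps^{-2}\norm{\mathbf v}^2+(\tau\mu)^{-1}\norm{\mathbf v}^2\le (C_P^2+1)\vertiii{\mathbf v}_\tau^2$. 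This follows from the weighted Poincar\'e inequality $\norm{\mathbf v}\le C_P\sigma_\eps\norm{\nabla\mathbf v}$ together with the definition of $\vertiii{\cdot}_\tau$. Taking the infimum over $\tilde q$ gives the upper bound with $C=\sqrt{d+C_P^2+1}$, uniformly in $\tau$.

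\emph{Lower bound.} We keep the same extension $Q=\mathcal E_\eps q$ from Section~\ref{subsec:restriction_extension}, which does not depend on $\tau$; Lemma~\ref{lem:pressure_extension_consistency} still gives $Q=q$ on $\OmM$. The new ingredient is a $\tau$-version of \eqref{eq:ell_q_bound}:
\[
|\ell_q(\mathbf\Phi)|\le\norm{q}_{*,\OmM,\tau}\vertiii{R_\eps\mathbf\Phi}_\tau .
\]
Combining \eqref{eq:restriction_operator_estimate} with \eqref{eq:R_L2_stability} gives
\[
\vertiii{R_\eps\mathbf\Phi}_\tau\le C\bigl(\norm{\nabla\mathbf\Phi}_{L^2(\Omega)}+(\sigma_\eps^{-1}+(\tau\mu)^{-1/2})\norm{\mathbf\Phi}_{L^2(\Omega)}\bigr)\le C'\bigl(\norm{\nabla\mathbf\Phi}+\sigma_{\eps,\tau}^{-1}\norm{\mathbf\Phi}\bigr).
\]
Next we apply Lemma~\ref{lem:weighted_necas_K} with $\sigma=\sigma_{\eps,\tau}$, which yields $\norm{Q}_{K_{\sigma_{\eps,\tau}}(\Omega)}\le C_NC'\norm{q}_{*,\OmM,\tau}$. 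Finally, restricting competitors from $\Omega$ to $\OmM$, exactly as in the proof of the lower bound of Theorem~\ref{thm:main}, transfers this bound to $\norm{q}_{L^2+\sigma_{\eps,\tau}H^1(\OmM)}$.

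\emph{Main obstacle.} The delicate point is the hypothesis $0<\sigma\le1$ in Lemma~\ref{lem:weighted_necas_K}. Because $\sigma_{\eps,\tau}\le\sigma_\eps\lesssim\eps$, the condition holds for small $\eps$ and every $\tau$. I would still check that the constant $C_N$ really is independent of $\sigma$ as $\sigma\to0$. This is the case because the interpolation bound \eqref{eq:K_functional_interpolation_step} applies the $K$-method at each fixed weight with the endpoint constants $\max(C_{\mathrm N}^{(0)},1)$. I would also check that the sum--intersection duality \eqref{eq:necas_dual_step} holds with $\sigma$-independent equivalence constants, which is standard, the factor being at most $\sqrt2$. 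If the restriction $\sigma\le1$ were a problem for moderate $\eps$, I would rescale it into the constant, since for $\sigma\asymp1$ both sides reduce to the classical Ne\v{c}as case.
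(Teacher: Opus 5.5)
Your proposal is correct and follows the paper's sketch step for step. The upper bound uses the perforated Poincar\'e inequality with the same constant $\sqrt{d+1+C_P^2}$. The lower bound reuses the $\tau$-independent extension $\mathcal E_\eps q$, bounds $\ell_q$ via \eqref{eq:restriction_operator_estimate}, \eqref{eq:R_L2_stability} and $\sigma_\eps^{-1}+(\tau\mu)^{-1/2}\le\sqrt2\,\sigma_{\eps,\tau}^{-1}$, applies Lemma~\ref{lem:weighted_necas_K} at $\sigma=\sigma_{\eps,\tau}$, and restricts competitors to $\OmM$. Your extra check that $C_N$ and the sum--intersection duality constant do not depend on $\sigma$ is what makes $\sigma_{\eps,\tau}\to0$ harmless as $\tau\to0$, so $\tau$-uniformity rests on it; the paper takes this for granted in the same way.
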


\begin{proof}[Sketch]
The argument tracks the proof of Theorem~\ref{thm:main} with $\sigma_\eps$
replaced by $\sigma_{\eps,\tau}$ throughout. We highlight only the steps
that change.

\smallskip
\noindent\textbf{Upper bound.}
For $\mathbf v\in V(\OmM)$, the perforated Poincar\'e inequality
$\norm{\mathbf v}_{L^2(\OmM)}\le C_P\sigma_\eps\norm{\nabla\mathbf v}_{L^2(\OmM)}$
\cite{infsup_perforation} yields the mass-energy bound
\begin{equation}
\label{eq:mass_energy_bound}
    \sigma_{\eps,\tau}^{-2}\,\norm{\mathbf v}_{L^2(\OmM)}^2
    \;=\;
    \sigma_\eps^{-2}\,\norm{\mathbf v}_{L^2(\OmM)}^2
    +(\tau\mu)^{-1}\,\norm{\mathbf v}_{L^2(\OmM)}^2
    \;\le\;
    C_P^2\,\norm{\nabla\mathbf v}_{L^2(\OmM)}^2
    +(\tau\mu)^{-1}\,\norm{\mathbf v}_{L^2(\OmM)}^2.
\end{equation}
For any $\tilde q\in H^1(\OmM)$, Cauchy--Schwarz gives
\(
    \int_{\OmM}(q-\tilde q)\nabla\!\cdot\!\mathbf v
    \le \sqrt d\norm{q-\tilde q}_{L^2}\norm{\nabla\mathbf v}_{L^2},
\)
and integration by parts with \eqref{eq:mass_energy_bound} gives
\(
    \int_{\OmM}\tilde q\nabla\!\cdot\!\mathbf v
    = -\int_{\OmM}\nabla\tilde q\cdot\mathbf v
    \le \norm{\sigma_{\eps,\tau}\nabla\tilde q}_{L^2}\cdot\sigma_{\eps,\tau}^{-1}\norm{\mathbf v}_{L^2}
    \le \sqrt{1+C_P^2}\,\norm{\sigma_{\eps,\tau}\nabla\tilde q}_{L^2}
    (\norm{\nabla\mathbf v}_{L^2}^2+(\tau\mu)^{-1}\norm{\mathbf v}_{L^2}^2)^{1/2}.
\)
Taking the supremum over $\mathbf v$, the infimum over $\tilde q$, and combining,
\[
    \norm{q}_{*,\OmM,\tau}
    \le
    \sqrt{d+1+C_P^2}\,
    \norm{q}_{L^2+\sigma_{\eps,\tau}H^1}.
\]

\smallskip
\noindent\textbf{Lower bound.}
Define $\ell_q$ on $H_0^1(\Omega)^d$ by
\eqref{eq:pressure_extension_functional} as in
Section~\ref{subsec:restriction_extension}. The time-discrete inf-sup definition
\eqref{eq:time_discrete_infsup_def} applied to $R_\eps\mathbf\Phi$ gives
\(
    |\ell_q(\mathbf\Phi)|
    \le
    \norm{q}_{*,\OmM,\tau}\,
    (\norm{\nabla R_\eps\mathbf\Phi}_{L^2(\OmM)}^2
        +(\tau\mu)^{-1}\norm{R_\eps\mathbf\Phi}_{L^2(\OmM)}^2)^{1/2}.
\)
By \eqref{eq:restriction_operator_estimate} and \eqref{eq:R_L2_stability},
\[
    \norm{\nabla R_\eps\mathbf\Phi}_{L^2(\OmM)}
    +(\tau\mu)^{-1/2}\norm{R_\eps\mathbf\Phi}_{L^2(\OmM)}
    \le
    C_R\bigl(\norm{\nabla\mathbf\Phi}_{L^2(\Omega)}+\sigma_\eps^{-1}\norm{\mathbf\Phi}_{L^2(\Omega)}\bigr)
    +C_R^{L^2}(\tau\mu)^{-1/2}\norm{\mathbf\Phi}_{L^2(\Omega)},
\]
and using $\sigma_\eps^{-1}+(\tau\mu)^{-1/2}\le\sqrt 2\,\sigma_{\eps,\tau}^{-1}$
we obtain the $\sigma_{\eps,\tau}$-scaled dual bound
\begin{equation}
\label{eq:ell_q_time_discrete_bound}
    |\ell_q(\mathbf\Phi)|
    \;\le\;
    C\,\norm{q}_{*,\OmM,\tau}
    \bigl(
        \norm{\nabla\mathbf\Phi}_{L^2(\Omega)}
        +\sigma_{\eps,\tau}^{-1}\norm{\mathbf\Phi}_{L^2(\Omega)}
    \bigr).
\end{equation}
The compatibility condition of Lemma~\ref{lem:de_rham} on $\ell_q$ is
identical to the steady case (property~(2) of
Proposition~\ref{prop:restriction_operator_lu_allaire} does not depend on
$\tau$), so de Rham furnishes the same extension
$Q=\mathcal E_\eps q\in L^2(\Omega)/\mathbb R$. Applying
Lemma~\ref{lem:weighted_necas_K} with parameter $\sigma_{\eps,\tau}$ and
\eqref{eq:ell_q_time_discrete_bound} gives
$\norm{Q}_{K_{\sigma_{\eps,\tau}}(\Omega)}\le C\,\norm{q}_{*,\OmM,\tau}$.
The consistency $Q|_{\OmM}=q$
(Lemma~\ref{lem:pressure_extension_consistency}) is unchanged, and the
$H^1$-competitor restriction of Section~\ref{sec:lower_bound_proof} delivers
the bound on $\norm{q}_{L^2+\sigma_{\eps,\tau}H^1(\OmM)}$.
\end{proof}

\begin{corollary}[Time-discrete Stokes preconditioner]
\label{cor:time_discrete_stokes_precond}
For a backward-Euler step of size $\tau>0$ applied to the time-dependent
Stokes problem
$\partial_t\mathbf u-\mu\Delta\mathbf u+\nabla p=\mathbf f$,
$\nabla\!\cdot\!\mathbf u=0$ on $\OmM$ with no-slip, the pressure-norm
equivalence \eqref{eq:time_discrete_equivalence} holds with the combined short scale
\(
    \sigma_{\eps,\tau}
    :=\bigl(\sigma_\eps^{-2}+(\tau\mu)^{-1}\bigr)^{-1/2}.
\)
The $K$-functional Riesz map identity of
Section~\ref{subsec:riesz_derivation} adapts to
\begin{equation}
\label{eq:MK_inverse_time_discrete}
    M_K^{-1}
    \;=\;
    M_p^{-1}+\sigma_{\eps,\tau}^{-2}\,L_p^{-1}
    \;=\;
    M_p^{-1}+\bigl(\sigma_\eps^{-2}+(\tau\mu)^{-1}\bigr)\,L_p^{-1},
\end{equation}
so the upper block-triangular preconditioner of
Section~\ref{sec:preconditioner} inherits a $\tau$-uniform iteration count
in addition to the $\eps$- and $h$-uniformity of Theorem~\ref{thm:main}.
The threshold $\tau\asymp\sigma_\eps^2/\mu$ separates the Stokes-like regime
$\tau\gg\sigma_\eps^2/\mu$ (where $\sigma_{\eps,\tau}\sim\sigma_\eps$ and
\eqref{eq:MK_inverse_time_discrete} reduces to \eqref{eq:MK_inverse}) from the
time-dominated regime $\tau\ll\sigma_\eps^2/\mu$ (where
$\sigma_{\eps,\tau}\sim(\tau\mu)^{1/2}$ and the mass term in
\eqref{eq:MK_inverse_time_discrete} dominates).
\end{corollary}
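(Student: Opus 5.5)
The statement to prove is Corollary~\ref{cor:time_discrete_stokes_precond}. Its content is two claims. First, the norm equivalence \eqref{eq:time_discrete_equivalence} holds with the combined scale $\sigma_{\eps,\tau}$. Second, the $K$-functional Riesz map takes the form \eqref{eq:MK_inverse_time_discrete}.

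The first claim is exactly Theorem~\ref{thm:time_discrete_stokes}, so that part reduces to citing it. To get the $h$-uniform version, I would pass to the discrete setting as in Corollary~\ref{cor:discrete_equiv}.

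\begin{itemize}
\item The discrete upper bound follows by restricting the supremum to $V_h$.
\item For the discrete lower bound, I need the Fortin operator to be stable in the time-discrete energy norm $(\norm{\nabla\mathbf v}^2+(\tau\mu)^{-1}\norm{\mathbf v}^2)^{1/2}$. This means supplementing Assumption~\ref{ass:fortin} with $L^2$ stability $\norm{\Pi_h^F\mathbf v}_{L^2}\le C\norm{\mathbf v}_{L^2}$. For quasi-interpolation-based Fortin operators this is standard, and I would state it explicitly as an added hypothesis.
\end{itemize}

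With that, the same chain of inequalities yields $\norm{q_h}_{*,\OmM,\tau}\le C\norm{q_h}_{*,\OmM,\tau,h}$. The discrete equivalence then holds with constants independent of $\eps$, $h$ and $\tau$.

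For the Riesz map, I would repeat the derivation of Section~\ref{subsec:riesz_derivation} with $\sigma$ left as a free parameter. On the discrete level, the squared $K$-norm is $q^\top M_K q=\min_{\tilde q}\bigl[(q-\tilde q)^\top M_p(q-\tilde q)+\sigma^2\tilde q^\top L_p\tilde q\bigr]$. This is an infimal convolution of two quadratic forms, so its inverse is the sum of the inverses: $M_K^{-1}=M_p^{-1}+\sigma^{-2}L_p^{-1}$, with $L_p^{-1}$ taken on mean-zero vectors.

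One point needs care here. The $K$-norm on $Q_h$ should use the discrete competitor space $\tilde q\in Q_h$ rather than $H^1(\OmM)$. The equivalence of these two versions, uniformly in $h$, is presumably already handled in Section~\ref{subsec:riesz_derivation}, and I would invoke it unchanged. Substituting $\sigma=\sigma_{\eps,\tau}$ and using $\sigma_{\eps,\tau}^{-2}=\sigma_\eps^{-2}+(\tau\mu)^{-1}$ gives \eqref{eq:MK_inverse_time_discrete}.

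The $\tau$-uniform iteration count then follows from the operator-preconditioning bound. The condition number of the preconditioned system is controlled by $(C_*/c_*)^2$, and these constants are $\tau$-independent.

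The regime statements are elementary. From $\sigma_{\eps,\tau}^{-2}=\sigma_\eps^{-2}(1+\sigma_\eps^2/(\tau\mu))$:
\begin{itemize}
\item when $\tau\gg\sigma_\eps^2/\mu$, $\sigma_{\eps,\tau}\sim\sigma_\eps$ and the formula reduces to \eqref{eq:MK_inverse};
\item when $\tau\ll\sigma_\eps^2/\mu$, $\sigma_{\eps,\tau}^2\sim\tau\mu$.
\end{itemize}

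In the second case the claim that the mass term dominates should be read at the operator level. Scaled by the spectrum of $L_p^{-1}$, the stiffness contribution is $(\tau\mu)^{-1}L_p^{-1}$, and it is small relative to $M_p^{-1}$ on modes with wavelength much shorter than $(\tau\mu)^{1/2}$. I would phrase this carefully rather than as a pointwise matrix inequality.

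The main obstacle is the $\tau$-robust discrete transfer, namely the $L^2$ stability of the Fortin operator uniformly in $\eps$ and $h$ near the pillar boundaries. If the construction of \cite{sande2025robust} does not give it directly, my fallback would be a Scott--Zhang-type interpolant corrected by local bubble functions. Bubble corrections preserve $L^2$ bounds, since each is controlled by local averages of $\mathbf v$.
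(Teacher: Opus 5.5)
Your proposal is correct and follows the paper's own route: the continuous equivalence is Theorem~\ref{thm:time_discrete_stokes}, and the Riesz-map identity is the parallel-sum computation of Section~\ref{subsec:riesz_derivation} with $\sigma_\eps$ replaced by $\sigma_{\eps,\tau}$. The $h$-uniform version comes from repeating the Fortin argument of Corollary~\ref{cor:discrete_equiv}, and the regime statements are elementary.

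Your extra $L^2$-stability hypothesis on $\Pi_h^F$ is genuinely needed, and the paper's remark that the discrete counterpart ``follows immediately'' skips it. Testing with $\Pi_h^F\mathbf v$ requires $\norm{\nabla\Pi_h^F\mathbf v}^2+(\tau\mu)^{-1}\norm{\Pi_h^F\mathbf v}^2\lesssim\norm{\nabla\mathbf v}^2+(\tau\mu)^{-1}\norm{\mathbf v}^2$ uniformly in $\tau$. Assumption~\ref{ass:fortin} plus the perforated Poincar\'e inequality only gives $(\tau\mu)^{-1}\norm{\Pi_h^F\mathbf v}^2\lesssim\sigma_\eps^2(\tau\mu)^{-1}\norm{\nabla\mathbf v}^2$. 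That bound is harmless for $\tau\gtrsim\sigma_\eps^2/\mu$ but degenerates in the time-dominated regime. It is the discrete analogue of the hypothesis \eqref{eq:R_L2_stability} that the paper imposes on $R_\eps$.

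Two points need fixing.

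\textbf{Discrete competitors.} The equivalence between the $K$-norm with competitors in $H^1(\OmM)$ and with competitors in $Q_h$ is \emph{not} handled in Section~\ref{subsec:riesz_derivation}. The paper silently switches to $\tilde{\mathbf q}\in\R^{n_p}$ in \eqref{eq:M_K_variational}, so there is nothing to invoke. It is easy to supply. Take a projection $P_h$ onto $Q_h$ that is both $L^2$- and $H^1$-stable, e.g.\ a Scott--Zhang quasi-interpolant built from element-based dual functions on a shape-regular mesh. Then $q_h-P_h\tilde q=P_h(q_h-\tilde q)$ gives $\norm{q_h-P_h\tilde q}^2+\sigma^2\norm{\nabla P_h\tilde q}^2\lesssim\norm{q_h-\tilde q}^2+\sigma^2\norm{\nabla\tilde q}^2$, with constants independent of $h$ and $\sigma$.

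\textbf{The regime claim.} You read ``the mass term dominates'' as $M_p^{-1}$ dominating on modes of wavelength below $(\tau\mu)^{1/2}$. That reading does not support the claim. As $\tau\to0$ that set of modes shrinks, and on a fixed mesh it eventually contains no discrete mode, so it is the $L_p^{-1}$ term that takes over. The correct reading is that the time-step contribution $(\tau\mu)^{-1}$, which comes from the velocity mass, dominates $\sigma_\eps^{-2}$ in the coefficient of $L_p^{-1}$. Then $M_K^{-1}\approx M_p^{-1}+(\tau\mu)^{-1}L_p^{-1}$, the Cahouet--Chabard form in which the geometry drops out. This is immediate from your formula for $\sigma_{\eps,\tau}^{-2}$.

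Two smaller remarks. Your bubble fallback works for MINI but not as stated for $\mathbb P_2$--$\mathbb P_1$, which has no element bubbles. A Fortin operator for Taylor--Hood that is uniformly bounded in both $L^2$ and $H^1$ does exist in two dimensions (Mardal, Sch\"oberl and Winther), but it is a nontrivial construction. Also, $\tau$-uniform GMRES convergence needs the velocity AMG to be uniform for $A+(\tau\mu)^{-1}M_u$, with $M_u$ the velocity mass matrix. Both you and the paper leave this implicit.
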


A discrete counterpart of Theorem~\ref{thm:time_discrete_stokes} follows immediately
from Corollary~\ref{cor:discrete_equiv} by repeating the Fortin-operator
argument with the time-discrete Stokes energy denominator; numerical verification of
the $\tau$-uniformity predicted by Corollary~\ref{cor:time_discrete_stokes_precond} is
reported in Section~\ref{subsec:time_discrete_num}.

\section{Norm-induced Preconditioners}\label{sec:preconditioner}

Theorem~\ref{thm:main} and its discrete counterpart Corollary~\ref{cor:discrete_equiv} identify the inf-sup Riesz map on $Q_h$ with the $K$-functional Riesz map up to constants independent of the pillar density $m$ and the mesh size $h$. We use this identification to build an upper block-triangular preconditioner for the discrete Stokes system, following the operator-preconditioning framework of Mardal--Winther~\cite{Mardal2010} and Loghin--Wathen~\cite{loghin2004analysis}; for the broader theory of saddle-point solvers and block preconditioners we refer to the surveys of Benzi, Golub and Liesen~\cite{Benzi2005} and Elman, Silvester and Wathen~\cite{elman2014finite}. The construction inherits the constants of Theorem~\ref{thm:main}, so the resulting GMRES convergence rate is independent of the pillar density $m$ and the mesh size $h$.

\subsection{Discrete saddle-point system}

A conforming, inf-sup stable finite element discretization of the Stokes problem \eqref{eq:stokes_strong} produces the saddle-point system
\begin{equation}\label{eq:discrete_saddle}
    \mathcal{A}\begin{bmatrix}\mathbf{u}\\ p\end{bmatrix}
    \;:=\;
    \begin{bmatrix} A & B^\top\\ B & 0 \end{bmatrix}
    \begin{bmatrix}\mathbf{u}\\ p\end{bmatrix}
    \;=\;
    \begin{bmatrix}\mathbf{f}\\ 0\end{bmatrix},
\end{equation}
where $A\in\R^{n_u\times n_u}$ is the symmetric positive definite Galerkin matrix of the vector Laplacian, $B\in\R^{n_p\times n_u}$ is the discrete divergence operator, and $\mathbf{u}, p$ are the velocity and pressure coefficient vectors. The pressure-space Schur complement is
\begin{equation}\label{eq:schur}
    S \;:=\; B A^{-1} B^\top \;\in\;\R^{n_p\times n_p}.
\end{equation}

\subsection{Schur complement as the inf-sup Riesz map}

The matrix $S$ is the Galerkin representation of the discrete inf-sup norm. For any pressure coefficient vector $p$ with associated finite element function $p_h$,
\begin{equation}\label{eq:S_infsup}
    (Sp,p)
    \;=\; (A^{-1}B^\top p,\, B^\top p)
    \;=\; \sup_{\mathbf{v}\neq\mathbf{0}}\frac{(B^\top p,\,\mathbf{v})^2}{(A\mathbf{v},\,\mathbf{v})}
    \;=\; \norm{p_h}_{*,\OmM,h}^2.
\end{equation}
By Corollary~\ref{cor:discrete_equiv}, $\norm{p_h}_{*,\OmM,h}\asymp\norm{p_h}_{L^2+\sigma_\eps H^1}$ with constants uniform in $\eps$ and $h$. Hence
\begin{equation}\label{eq:S_K_equiv}
    c_*^2\, M_K \;\le\; S \;\le\; C_*^2\, M_K,
\end{equation}
where $M_K\in\mathbb R^{n_p\times n_p}$ is the Galerkin matrix of the $K$-functional norm $\norm{\cdot}_{L^2+\sigma_\eps H^1}$, i.e.\ the unique symmetric positive definite matrix with $\mathbf q^\top M_K\mathbf q=\norm{q_h}_{L^2+\sigma_\eps H^1}^2$ for the finite element function $q_h$ associated with $\mathbf q$. The Schur complement therefore inherits the spectrum of the $K$-functional Riesz map, and any tractable spectral equivalent of $M_K$ becomes a tractable preconditioner for $S$.

\subsection{Closed form of the \texorpdfstring{$K$}{K}-functional Riesz map}\label{subsec:riesz_derivation}

The previous subsection identifies $M_K$ as the spectrally correct pressure preconditioner, but leaves its algebraic form implicit. We now derive that form explicitly. Let $M_p,L_p\in\R^{n_p\times n_p}$ denote, respectively, the pressure mass and Neumann stiffness matrices,
\[
(M_p)_{ij}=\int_{\OmM}\psi_i\psi_j\,dx,
\qquad
(L_p)_{ij}=\int_{\OmM}\nabla\psi_i\cdot\nabla\psi_j\,dx,
\]
where $\{\psi_i\}$ is a basis of $Q_h$; throughout this subsection we also work modulo the one-dimensional constant kernel, so $L_p$ is SPD on $\mathrm{range}(\mathbf q\mapsto \mathbf q - (\mathbf 1^\top M_p\mathbf q/\mathbf 1^\top M_p\mathbf 1)\mathbf 1)$.

By definition,
\begin{equation}\label{eq:M_K_variational}
    \mathbf q^\top M_K \mathbf q
    \;=\;\norm{q_h}_{L^2+\sigma_\eps H^1}^2
    \;=\;\inf_{\tilde{\mathbf q}\in\R^{n_p}}
    \underbrace{(\mathbf q-\tilde{\mathbf q})^\top M_p(\mathbf q-\tilde{\mathbf q})
    + \sigma_\eps^2\,\tilde{\mathbf q}^\top L_p\tilde{\mathbf q}}_{=:F(\tilde{\mathbf q};\mathbf q)}.
\end{equation}
Setting $\nabla_{\tilde{\mathbf q}} F=0$ gives the discrete Euler--Lagrange equation
\begin{equation}\label{eq:EL_discrete}
    (M_p+\sigma_\eps^2 L_p)\,\tilde{\mathbf q}_* \;=\; M_p\,\mathbf q,
    \qquad
    \tilde{\mathbf q}_* = (M_p+\sigma_\eps^2 L_p)^{-1} M_p\,\mathbf q,
\end{equation}
which is the Galerkin discretization of the Neumann reaction--diffusion equation $\tilde q_* - \sigma_\eps^2 \Delta \tilde q_* = q$ with $\partial_\nu \tilde q_*=0$. Substituting \eqref{eq:EL_discrete} back into \eqref{eq:M_K_variational} and using the optimality identity $M_p(\mathbf q-\tilde{\mathbf q}_*)=\sigma_\eps^2 L_p\tilde{\mathbf q}_*$,
\begin{equation}\label{eq:MK_form1}
    \mathbf q^\top M_K\mathbf q
    = (\mathbf q-\tilde{\mathbf q}_*)^\top M_p\mathbf q
    = \mathbf q^\top\bigl(M_p - M_p(M_p+\sigma_\eps^2 L_p)^{-1}M_p\bigr)\mathbf q.
\end{equation}
Writing $N:=M_p+\sigma_\eps^2 L_p$ and using $M_p = N - \sigma_\eps^2 L_p$,
\begin{equation}\label{eq:MK_form2}
    M_K \;=\; M_p - M_p N^{-1} M_p
    \;=\; \sigma_\eps^2 L_p\,N^{-1}\,M_p
    \;=\; M_p\,N^{-1}\,\sigma_\eps^2 L_p,
\end{equation}
where the two symmetric-looking factorizations are equal because $M_K$ is symmetric. The matrix $M_K$ is therefore the \emph{parallel sum} of $M_p$ and $\sigma_\eps^2 L_p$ (the discrete analogue of the harmonic mean of operators). Inverting \eqref{eq:MK_form2} via the identity $(M_p-M_pN^{-1}M_p)^{-1}=M_p^{-1}+(\sigma_\eps^2 L_p)^{-1}$ --- which holds because $M_K\cdot(M_p^{-1}+\sigma_\eps^{-2}L_p^{-1})=I-M_pN^{-1}+M_pN^{-1}=I$ by a direct computation --- we obtain the closed form
\begin{equation}\label{eq:MK_inverse}
    {\,M_K^{-1} \;=\; M_p^{-1} \;+\; \sigma_\eps^{-2}\,L_p^{-1}\,.}
\end{equation}
Equation~\eqref{eq:MK_inverse} is the central structural identity: the Riesz map of the $K$-functional norm, viewed as an operator on $Q_h$, is the sum of the $L^2$ Riesz map $M_p^{-1}$ and the $\sigma_\eps^{-2}$-scaled Neumann-Laplacian inverse $\sigma_\eps^{-2}L_p^{-1}$. The two summands capture the two extreme regimes of the $K$-functional: on low-frequency pressure modes ($\sigma_\eps^2\lambda_k\gg 1$ in the generalized eigenbasis $L_p\mathbf v_k=\lambda_k M_p\mathbf v_k$) the $M_p^{-1}$ term dominates and $M_K\sim M_p$, consistent with the sum norm being $L^2$-like there; on high-frequency modes ($\sigma_\eps^2\lambda_k\ll 1$) the $\sigma_\eps^{-2}L_p^{-1}$ term dominates and $M_K\sim\sigma_\eps^2 L_p$, consistent with the sum norm collapsing onto the $\sigma_\eps H^1$ scale. The identity \eqref{eq:MK_inverse} is also consistent with the continuous $K$-method: on $L^2\cap H^1$ the sum-norm dual is the intersection-norm dual of the gradient of $q$, which at the operator level is the parallel-sum inverse $(\mathrm{id})^{-1}+(-\sigma_\eps^2\Delta_N)^{-1}$ of the Neumann reaction--diffusion operator.

\subsection{Upper block-triangular preconditioner}

The operator-preconditioning principle prescribes that any spectrally equivalent surrogate for $S$ yields a robust preconditioner for $\mathcal{A}$~\cite{Mardal2010,loghin2004analysis}. We adopt the upper block-triangular form
\begin{equation}\label{eq:Pu}
    \mathcal{P}_U \;:=\;
    \begin{bmatrix}
        \hat{A} & B^\top \\
        0       & -\hat{S}
    \end{bmatrix},
\end{equation}
where $\hat{A}$ is a spectrally equivalent approximation of $A$ and $\hat{S}$ is a spectrally equivalent approximation of the Schur complement $S$, which by \eqref{eq:S_K_equiv}--\eqref{eq:MK_inverse} is spectrally equivalent to $M_K$.

The velocity block is standard: $\hat{A}^{-1}$ is realized by a single algebraic multigrid V-cycle~\cite{RugeStuben1987,VanekMandelBrezina1996}, $\hat{A}^{-1}=\mathrm{AMG}(A)$, which is robust because $A$ is a uniformly elliptic vector Laplacian.

The pressure block is realized directly from \eqref{eq:MK_inverse}. Since $M_K^{-1}$ splits as the sum $M_p^{-1}+\sigma_\eps^{-2}L_p^{-1}$, an action $\hat S^{-1}r$ is obtained by adding an $L^2$-inverse and a scaled Laplacian inverse:
\begin{equation}\label{eq:Shat}
    \hat{S}^{-1}\;:=\;\mathrm{AMG}(M_p)\;+\;\sigma_\eps^{-2}\,\mathrm{AMG}(L_p),
\end{equation}
where each of the two summands is realized by one hypre/BoomerAMG V-cycle~\cite{HensonYang2002}. The mass matrix inverse $\mathrm{AMG}(M_p)$ can equivalently be replaced by a Jacobi-preconditioned Chebyshev smoother, which is sometimes cheaper; we use BoomerAMG in both blocks for uniformity. The cost of one application of $\hat S^{-1}$ is therefore two AMG V-cycles on the pressure space, both linear in $n_p$.

\subsection{Spectral bound for the preconditioned operator}

\begin{proposition}\label{prop:precond_bound}
    Let $\hat{A}, \hat{S}$ be symmetric positive definite and satisfy
    \[
        \alpha_A A \le \hat{A} \le \beta_A A,
        \qquad
        \alpha_S S \le \hat{S} \le \beta_S S,
    \]
    with $\alpha_A, \beta_A, \alpha_S, \beta_S > 0$ independent of $\eps$, $m$, and $h$. Then the eigenvalues of $\mathcal{P}_U^{-1}\mathcal{A}$ lie in a bounded set away from the origin whose endpoints depend only on $\alpha_A, \beta_A, \alpha_S, \beta_S$. Preconditioned GMRES applied to \eqref{eq:discrete_saddle} with $\mathcal{P}_U$ converges at a rate determined by these constants alone.
\end{proposition}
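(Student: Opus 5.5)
The plan is to reduce $\mathcal P_U^{-1}\mathcal A$ to a block-triangular perturbation of the identity and track its eigenvalues through a Schur-complement (generalized eigenvalue) analysis.

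\textbf{Step 1: the eigenvalue equations.} Let $\lambda$ be an eigenvalue with eigenvector $(\mathbf u,p)\neq 0$, so that $\mathcal A(\mathbf u,p)^\top=\lambda\mathcal P_U(\mathbf u,p)^\top$. Written out, the two rows read
\[
A\mathbf u+B^\top p=\lambda(\hat A\mathbf u+B^\top p),\qquad B\mathbf u=-\lambda\hat S p .
\]

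\textbf{Step 2: the case $\lambda=1$.} This case is trivially bounded away from the origin, so we may assume $\lambda\neq1$ from here on.

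\textbf{Step 3: eliminate the velocity.} The first row gives $(A-\lambda\hat A)\mathbf u=(\lambda-1)B^\top p$. Two subcases arise.
\begin{itemize}
\item If $A-\lambda\hat A$ is singular, then $\lambda$ is an eigenvalue of the pencil $(A,\hat A)$. Hence $\lambda\in[\beta_A^{-1},\alpha_A^{-1}]$, which is already acceptable.
\item Otherwise, $\mathbf u=(\lambda-1)(A-\lambda\hat A)^{-1}B^\top p$, and substituting into the second row yields
\[
(\lambda-1)\,B(A-\lambda\hat A)^{-1}B^\top p=-\lambda\hat Sp .
\]
\end{itemize}

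\textbf{Step 4: the model case $\hat A=A$.} Here $B(A-\lambda A)^{-1}B^\top=(1-\lambda)^{-1}S$, so the equation becomes $-Sp=-\lambda\hat Sp$. Thus $\lambda$ is a generalized eigenvalue of $(S,\hat S)$, i.e. $\lambda\in[\beta_S^{-1},\alpha_S^{-1}]$. This is the clean picture, and it shows the spectrum is governed by the constants $\alpha_S,\beta_S$ through $S\simeq\hat S\simeq M_K$.

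\textbf{Step 5: general $\hat A$ (expected main obstacle).} For general $\hat A$ the operator $(A-\lambda\hat A)^{-1}$ is not a scalar multiple of $A^{-1}$. Moreover $\lambda$ may be complex, because the preconditioned operator is nonsymmetric. I would proceed as follows.
\begin{enumerate}
\item Change variables: $\hat A=A^{1/2}EA^{1/2}$ with $E$ SPD and spectrum in $[\alpha_A,\beta_A]$, and $\tilde B=BA^{-1/2}$, so that $\tilde B\tilde B^\top=S$.
\item Normalize $\hat S^{-1/2}$ on the pressure side. With $G=\hat S^{-1/2}\tilde B$ and $w=\hat S^{1/2}p$, the equation becomes $(\lambda-1)G(I-\lambda E)^{-1}G^\top w=-\lambda w$, where $GG^\top$ has spectrum in $[\beta_S^{-1},\alpha_S^{-1}]$.
\item Test with $w$ and use the eigen-decomposition of $E$. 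Writing $z=G^\top w$, this gives the scalar relation
\[
-\lambda\|w\|^2=\sum_k \frac{\lambda-1}{1-\lambda e_k}\,|z_k|^2 ,
\qquad e_k\in[\alpha_A,\beta_A],
\]
with $\|z\|^2/\|w\|^2\in[\beta_S^{-1},\alpha_S^{-1}]$.
\end{enumerate}
A contradiction argument then confines $\lambda$:
\begin{itemize}
\item For $|\lambda|$ large, each factor $(\lambda-1)/(1-\lambda e_k)$ tends to $-1/e_k$, which is bounded. Hence $|\lambda|\lesssim \alpha_A^{-1}\alpha_S^{-1}$.
\item For $|\lambda|$ small, the right-hand side tends to $-\|z\|^2\neq0$ while the left-hand side vanishes. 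Hence $|\lambda|\gtrsim \beta_S^{-1}$, up to an $\alpha_A,\beta_A$-dependent factor.
\end{itemize}
Making both bounds quantitative, uniformly over $e_k\in[\alpha_A,\beta_A]$, gives a compact set $K\subset\mathbb C\setminus\{0\}$ depending only on the four constants. If this direct estimate proves awkward, I would fall back on the standard result for block-triangular preconditioners in the Loghin--Wathen framework: since $\hat A\simeq A$ and $\hat S\simeq S$, the quantities $\|\mathcal P_U^{-1}\mathcal A\|$ and $\|(\mathcal P_U^{-1}\mathcal A)^{-1}\|$ are bounded in the energy norm $\mathrm{diag}(A,S)$ by constants depending only on $\alpha_A,\beta_A,\alpha_S,\beta_S$.

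\textbf{Step 6: GMRES convergence.} The bounds on $\|\mathcal P_U^{-1}\mathcal A\|$ and $\|(\mathcal P_U^{-1}\mathcal A)^{-1}\|$ in that inner product bound the field of values away from the origin, or alternatively give an Elman-type residual estimate. Either route yields a GMRES convergence rate depending only on these constants. Independence of $\eps$, $m$ and $h$ is then inherited through $\hat S\simeq S$ via \eqref{eq:S_K_equiv}.
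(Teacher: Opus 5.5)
Your eigenvalue localization (Steps 1--5) is correct. It is in fact more than the paper provides: the paper omits the proof and simply attributes the proposition to the upper block-triangular case of Loghin--Wathen. The reduction to the scalar relation is sound, including complex $\lambda$, since $G$ is real and $E$ has a real orthonormal eigenbasis. Both limits are easily made quantitative. For $|\lambda|\le 1/(2\beta_A)$ one has $\bigl|\tfrac{\lambda-1}{1-\lambda e_k}+1\bigr|\le 2c_A|\lambda|$ with $c_A=\max(|1-\alpha_A|,|1-\beta_A|)$, and hence $|\lambda|\ge(\beta_S+2c_A)^{-1}$.

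You should add one point before any GMRES statement: an annulus is not enough, even for a normal operator, because a spectrum surrounding $0$ defeats every residual polynomial with $p(0)=1$. Your relation does exclude this. For non-real $\lambda$, combining its imaginary and real parts gives $\sum_k(2\,\mathrm{Re}\lambda\,e_k-1-|\lambda|^2e_k)|z_k|^2/|1-\lambda e_k|^2=0$, which forces $\mathrm{Re}\,\lambda>1/(2\beta_A)$. Negative real $\lambda$ fail by a sign check.

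The genuine gap is Step 6. Bounds on $\norm{T}_{\mathcal H}$ and $\norm{T^{-1}}_{\mathcal H}$ do \emph{not} bound the field of values away from the origin. A cyclic shift has both norms equal to $1$, yet GMRES stagnates for $n-1$ steps. For this very operator the implication fails outright: with $T=\mathcal P_U^{-1}\mathcal A$ and $\mathbf x=(\mathbf 0,p)$ one gets $T\mathbf x=(\hat A^{-1}B^\top p,\mathbf 0)$. Hence $\langle T\mathbf x,\mathbf x\rangle_{\mathcal H}=0$ for \emph{every} block-diagonal $\mathcal H=\mathrm{diag}(H_1,H_2)$, in particular for $\mathrm{diag}(A,S)$. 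Elman's bound is therefore vacuous. Since $T$ is non-normal, spectral localization alone gives no rate either, so the second sentence of the proposition is not established by your argument.

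What is missing is an argument that uses the triangular structure. For instance, with $\hat A=A$ the right-preconditioned operator is
\[
\mathcal A\mathcal P_U^{-1}=\begin{bmatrix} I & 0\\ C & D\end{bmatrix},\qquad C=BA^{-1},\quad D=S\hat S^{-1}.
\]
For $q(x)=(1-x)r(x)$ with $r(0)=1$ one checks that $q(\mathcal A\mathcal P_U^{-1})=-\begin{bmatrix}0&0\\ r(D)C&(D-I)r(D)\end{bmatrix}$. Here $D$ is self-adjoint in the $\hat S^{-1}$ inner product with spectrum in $[\beta_S^{-1},\alpha_S^{-1}]$, and $\norm{\hat S^{-1/2}BA^{-1/2}}\le\alpha_S^{-1/2}$. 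Chebyshev polynomials then give the rate $(\sqrt{\beta_S/\alpha_S}-1)/(\sqrt{\beta_S/\alpha_S}+1)$ in the $\mathrm{diag}(A^{-1},\hat S^{-1})$ residual norm. Equivalently, the field of values in $\mathrm{diag}(\gamma A^{-1},\hat S^{-1})$ is bounded away from $0$ once $\gamma>\beta_S/(4\alpha_S)$.

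For inexact $\hat A$, the same weighted computation picks up a cross term proportional to $\gamma\norm{I-A^{1/2}\hat A^{-1}A^{1/2}}$. It therefore closes only under an additional closeness hypothesis on $\hat A$. Otherwise you need a different tool, for example uniform resolvent bounds on a contour in the right half-plane enclosing the spectrum. Alternatively, quote the precise Loghin--Wathen theorem, as the paper does, and verify its hypotheses. In every case the resulting rate is for GMRES run in the weighted inner product, not the Euclidean one.
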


Proposition~\ref{prop:precond_bound} is the upper block-triangular case of the Loghin--Wathen analysis~\cite{loghin2004analysis}; we omit the proof. Theorem~\ref{thm:main}, via Corollary~\ref{cor:discrete_equiv} and the identity \eqref{eq:MK_inverse}, supplies the Schur complement constants: $\alpha_S$ and $\beta_S$ depend only on the equivalence constants $c_*, C_*$ between the discrete inf-sup norm and the $K$-functional norm and on the BoomerAMG approximation constants for the uniformly elliptic operators $M_p$ and $L_p$, all of which are uniform in $\eps$ and $m$. The velocity constants $\alpha_A, \beta_A$ are the standard AMG approximation constants for the discrete vector Laplacian. The iteration count of GMRES is therefore independent of $\eps$, the pillar density $m$, and the mesh size $h$. The numerical experiments of Section~\ref{sec:numerical} confirm this prediction.

\section{Numerical Results}\label{sec:numerical}

This section reports numerical experiments on the upper block-triangular preconditioner $\mathcal P_U$ of Section~\ref{sec:preconditioner} with the geometry-scaled pressure block
\[
    \hat S^{-1}=M_p^{-1}+\sigma_\eps^{-2}L_p^{-1},
\]
the discrete realization of the Riesz-map identity \eqref{eq:MK_inverse}. The study is organized around the structural predictions of the theory. A focused two-dimensional campaign (Sections~\ref{subsec:m_robust}--\ref{subsec:failure}) isolates each prediction in turn: the $m$-robustness of the iteration count and the necessity of both summands of \eqref{eq:MK_inverse} (Section~\ref{subsec:m_robust}); the underlying spectral mechanism (Section~\ref{subsec:spectrum}); the $\tau$-uniform time-discrete Stokes extension of Theorem~\ref{thm:time_discrete_stokes} (Section~\ref{subsec:time_discrete_num}); the behaviour on genuine tilted DLD arrays (Section~\ref{subsec:dld}); and the close-packing failure boundary as $\kappa\uparrow\frac12$ (Section~\ref{subsec:failure}).

\subsection{Experimental setup}\label{subsec:setup}

\paragraph{Nondimensionalization.}
The focused two-dimensional experiments are run on the fixed unit cell $\Omega=[0,1]^2$ with pitch $\mathrm{pitch}=1/m$, so that the pore scale is $\eps=\mathrm{pitch}=1/m$ and the linear pillar density is exactly $m$. Each pillar is a disk of relative radius $\kappa$ (radius $\kappa\,\mathrm{pitch}$), giving a throat width $w_{\mathrm{gap}}=(1-2\kappa)\eps$. In accordance with eq.~\eqref{eq:MK_inverse} the short scale passed to the pressure block is the throat form
\[
    \sigma_\eps\propto w_{\mathrm{gap}}=(1-2\kappa)\eps.
\]
The short scale passed to the pressure block is therefore taken proportional to the local throat width; the results below concern its geometric scaling rather than a universal numerical prefactor. The left boundary carries a parabolic inflow, the right boundary is a traction-free outlet, and all exterior walls and pillar surfaces are no-slip.

\paragraph{Discretization and solver.}
The Stokes saddle-point system is discretized with Taylor--Hood $\mathbb P_2$--$\mathbb P_1$ elements on unstructured simplicial meshes, with the throat resolved by at least $3$--$4$ element layers. The system is solved by right-preconditioned FGMRES (relative tolerance $10^{-8}$, restart $1000$), the PETSc~\cite{petsc-user-ref} field split using the upper Schur factorization \eqref{eq:Pu}. The velocity block is preconditioned by one HYPRE BoomerAMG V-cycle (HMIS coarsening, extended+i interpolation, $P_{\max}=4$); the pressure block applies $\hat S^{-1}=M_p^{-1}+\sigma_\eps^{-2}L_p^{-1}$, the mass matrix $M_p$ and the scaled Laplacian $\sigma_\eps^2 L_p$ each solved by one BoomerAMG V-cycle. The per-iteration cost is therefore three AMG V-cycles, all linear in the number of pressure degrees of freedom.

\paragraph{A note on the pressure Laplacian.}
Attaining the $m$-flat iteration counts reported below requires that the pressure Laplacian $L_p$ be assembled with a Dirichlet pin at the outlet (with boundary lifting) rather than as a pure Neumann operator with the constant mode removed by a null-space projection. The two are equivalent in exact arithmetic, but a single BoomerAMG V-cycle on the pinned operator is $m$-uniform, whereas the Neumann/null-space variant degrades with $m$ (e.g.\ from $36$ to $58$ iterations on a representative case). The same pinned $L_p$ is used in the spectral diagnostics of Section~\ref{subsec:spectrum}. The experiments run under DOLFINx~0.9.0~\cite{dolfinx2023} / PETSc~3.23~\cite{petsc-user-ref} (HYPRE~\cite{HensonYang2002}, SLEPc~\cite{HernandezRomanVidal2005}) with Gmsh~4.13~\cite{Gmsh2009}.

\subsection{Pillar density and the necessity of both summands}\label{subsec:m_robust}
The central prediction of Theorem~\ref{thm:main} is that the FGMRES iteration count is bounded independently of the pillar density $m$, in contrast with the $\Theta(m^{-1})$ degradation of the unpreconditioned inf-sup constant $\beta(\Omega_m)$. We verify this and, at the same time, isolate the contribution of each of the two summands of the Riesz map $M_K^{-1}=M_p^{-1}+\sigma_\eps^{-2}L_p^{-1}$ by comparing five pressure-block preconditioners over the range $m=2,\dots,32$ (a $16\times$ span in density) at three pillar radii $\kappa\in\{0.10,0.25,0.40\}$:
\begin{itemize}\setlength\itemsep{0pt}
    \item $M_p^{-1}$ --- the mass inverse alone, corresponding to the Stokes limit;
    \item $\sigma_\eps^{-2}L_p^{-1}$ --- the stiffness inverse alone, corresponding to the Darcy limit;
    \item $M_p^{-1}+\sigma_\eps^{-2}L_p^{-1}$ --- the $K$-functional inverse sum \eqref{eq:MK_inverse};
    \item $(M_p+\sigma_\eps^2 L_p)^{-1}$ --- the operator-sum (series) combination of Meier et al.~\cite{Meier2022}, with the same geometric weight;
    \item $(M_p+c_\star\sigma_\eps^2 L_p)^{-1}$ --- the same operator sum with $c_\star$ selected at $m=8$ and then fixed for all $m$.
\end{itemize}
Figure~\ref{fig:expA} and Table~\ref{tab:expA} report the results. The inverse-sum block is essentially flat: at $\kappa=0.25$ it runs $28\to29$ over $m=2\to32$ ($1.04\times$), and even at the demanding $\kappa=0.40$ it grows only $35\to40$ ($1.14\times$). The ablations fail in complementary ways. The mass inverse alone explodes with $m$ ($42\to{>}1000$ at $\kappa=0.25$), since it omits the high-frequency $\sigma_\eps H^1$ component; the stiffness inverse alone fails to converge at all, since it omits the low-frequency $L^2$ component---the two summands of \eqref{eq:MK_inverse} are individually necessary. The operator sum, although it uses the \emph{same} geometric weight $\sigma_\eps$, degrades with $m$ ($62\to520$ at $\kappa=0.25$), and fixing the best constant weight does not repair it ($41\to214$): the parallel (inverse-sum) structure of the $K$-functional Riesz map, not merely the choice of weight, is what delivers $m$-robustness. This turns the qualitative observation of \cite{Meier2022}---that $L^2$ and $H^1$ pressure norms should be combined---into the quantitative statement that they must be combined \emph{in parallel} at the scale $\sigma_\eps$.

\begin{figure}[t]
    \centering
    \includegraphics[width=\textwidth]{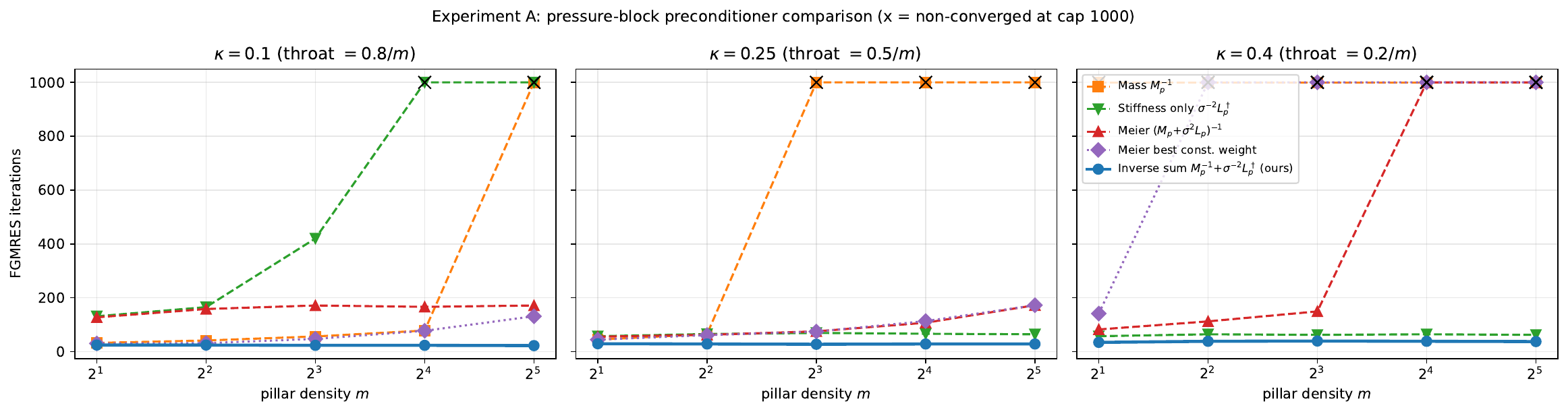}
    \caption{Pillar density robustness and ablation (Section~\ref{subsec:m_robust}). FGMRES iterations versus $m$ for the five pressure-block formulas, one panel per pillar radius $\kappa$; $\times$ marks non-convergence at the iteration cap $1000$. The inverse-sum block $M_p^{-1}+\sigma_\eps^{-2}L_p^{-1}$ (lowest curve) is flat in $m$, while the one-term and operator-sum alternatives degrade.}
    \label{fig:expA}
\end{figure}

\begin{table}[t]
    \centering
    \caption{Ablation of the pressure block at $\kappa=0.25$ (Section~\ref{subsec:m_robust}); ${}^\dagger$ denotes non-convergence at the cap $1000$. Only the parallel inverse sum is flat in $m$.}
    \label{tab:expA}
    \input{latex/exp_A_ablation.tex}
\end{table}

\subsection{Spectral mechanism}\label{subsec:spectrum}
To confirm that the flat iteration counts of Section~\ref{subsec:m_robust} reflect the predicted spectral picture rather than a fortuitous cancellation, we compute, on small serial instances, the dense generalized spectrum on the zero-mean subspace $L_0^2$. Figure~\ref{fig:expB} (left) and Table~\ref{tab:expB} confirm the two ingredients of the unpreconditioned problem: the Schur-complement condition number grows as $\kappa(S)=\Theta(m^2)$ (measured log--log slope $1.86$ at $\kappa=0.25$, $1.71$ at $\kappa=0.40$) and the discrete inf-sup constant decays as $\beta_h=\Theta(m^{-1})$ (slope $-0.89$), exactly the $\Theta(m^{-1})$ degradation of $\beta(\Omega_m)$. Figure~\ref{fig:expB} (right) shows the effect of the preconditioner: the spectrum of $M_K^{-1}S$ is confined to a fixed interval independent of $m$---$[0.12,1.88]$ at $\kappa=0.25$ and $[0.09,2.06]$ at $\kappa=0.40$, with the ratio $\lambda_{\max}/\lambda_{\min}$ varying by under $4\%$ across the whole range. This is the spectral counterpart of the bounded iteration count of Proposition~\ref{prop:precond_bound}: the preconditioner compresses an $m^2$-growing spectral interval into an $m$-independent one.

\begin{figure}[t]
    \centering
    \includegraphics[width=\textwidth]{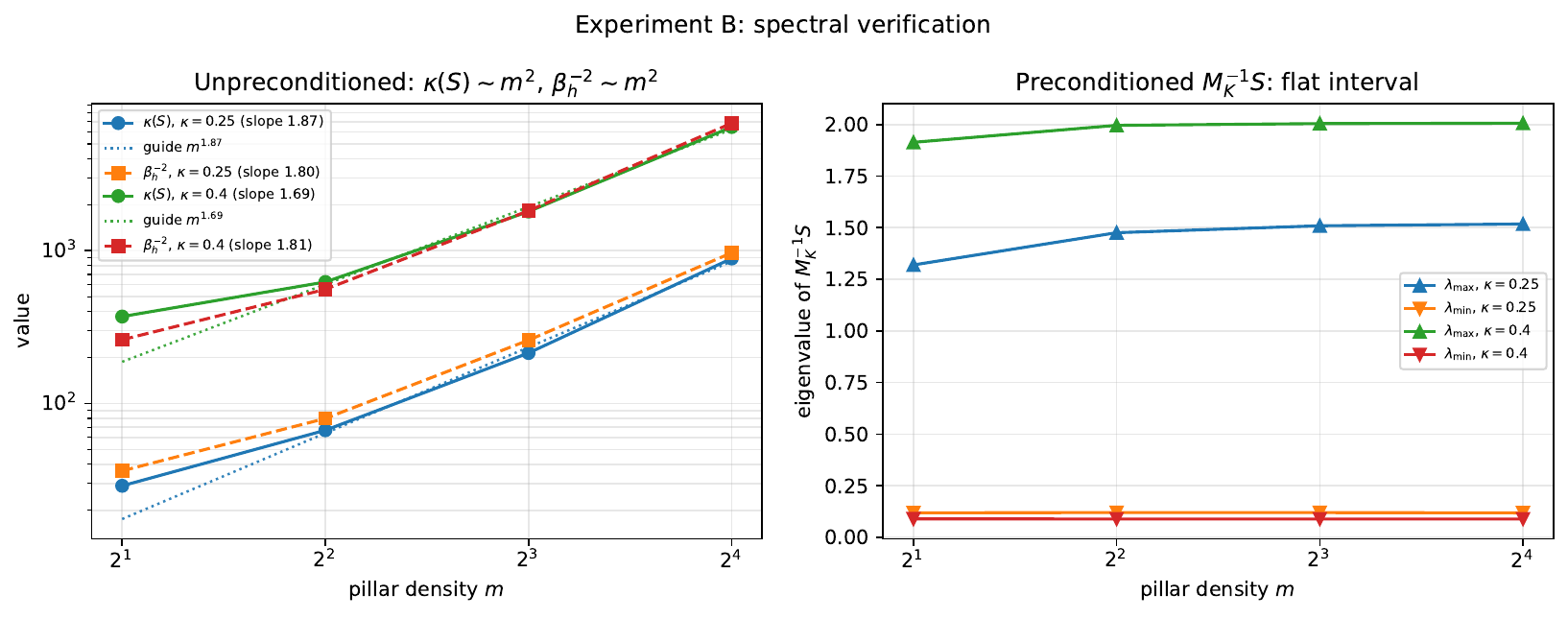}
    \caption{Spectral verification (Section~\ref{subsec:spectrum}). Left: the unpreconditioned Schur condition number $\kappa(S)$ and $\beta_h^{-2}$ grow like $m^2$ (log--log). Right: the preconditioned operator $M_K^{-1}S$ has its spectrum $[\lambda_{\min},\lambda_{\max}]$ confined to a fixed $m$-independent interval.}
    \label{fig:expB}
\end{figure}

\begin{table}[t]
    \centering
    \caption{Spectral data (Section~\ref{subsec:spectrum}): inf-sup constant $\beta_h$, Schur condition number $\kappa(S)$, and the preconditioned interval $[\lambda_{\min},\lambda_{\max}]$ of $M_K^{-1}S$ with its ratio.}
    \label{tab:expB}
    \input{latex/exp_B_spectrum.tex}
\end{table}

\subsection{Discretized time-dependent Stokes: \texorpdfstring{$\tau$}{tau}-uniformity}\label{subsec:time_discrete_num}
Theorem~\ref{thm:time_discrete_stokes} and Corollary~\ref{cor:time_discrete_stokes_precond} extend the equivalence to a backward-Euler discretization of time-dependent Stokes flow. The momentum equation contains the time-discretization term $\tau^{-1}u$, and the pressure block uses the combined short scale $\sigma_{\eps,\tau}=(\sigma_\eps^{-2}+(\tau\mu)^{-1})^{-1/2}$ of \eqref{eq:MK_inverse_time_discrete}. We sweep the step size $\tau$ over seven orders of magnitude ($10^{-4}$ to $10^{3}$) at $\kappa=0.25$, $\mu=1$, and $m\in\{4,8,16,32\}$. The combined scale is uniform across the entire range (fluctuation $1.13$--$1.75\times$), capturing both the small-$\tau$ time-dominated limit and the large-$\tau$ steady Stokes limit.

\paragraph{Both terms are needed across the transition.}
The combined scale is an inverse-sum combination of the geometric term $\sigma_\eps^{-2}$ and the temporal term $(\tau\mu)^{-1}$. At $m=4$, where the threshold $\tau\approx\sigma_\eps^2/\mu$ lies inside the tested range, we compare it with both one-sided controls: $\sigma_\eps$ alone and the time-only scale $(\tau\mu)^{1/2}$. Figure~\ref{fig:expD2}\,(left) shows that the two controls fail at opposite ends of the sweep: $\sigma_\eps$ alone requires $57$ iterations at $\tau=10^{-4}$, while the time-only scale rises to $63$ iterations for large $\tau$. Only the combined scale remains below $29$ iterations throughout.

\paragraph{Small-step density robustness.}
For $\tau\ll\sigma_\eps^2/\mu$, the combined scale satisfies $\sigma_{\eps,\tau}\approx(\tau\mu)^{1/2}$. Figure~\ref{fig:expD2}\,(right) and Table~\ref{tab:expD-density} test this regime over $m\in\{4,8,16,32\}$ at the three smallest time steps. The combined scale remains nearly flat in $m$: for example, it gives $[19,16,20,23]$ iterations at $\tau=10^{-4}$ and a $1.08\times$ span at $\tau=10^{-2}$. The time-only control is accurate at the smallest step but deteriorates once $\tau$ approaches the $m$-dependent threshold, reaching $938$ and $1332$ iterations at $\tau=10^{-2}$ for $m=16$ and $32$.

\begin{figure}[t]
    \centering
    \begin{subfigure}{0.49\textwidth}
        \centering
        \includegraphics[width=\textwidth]{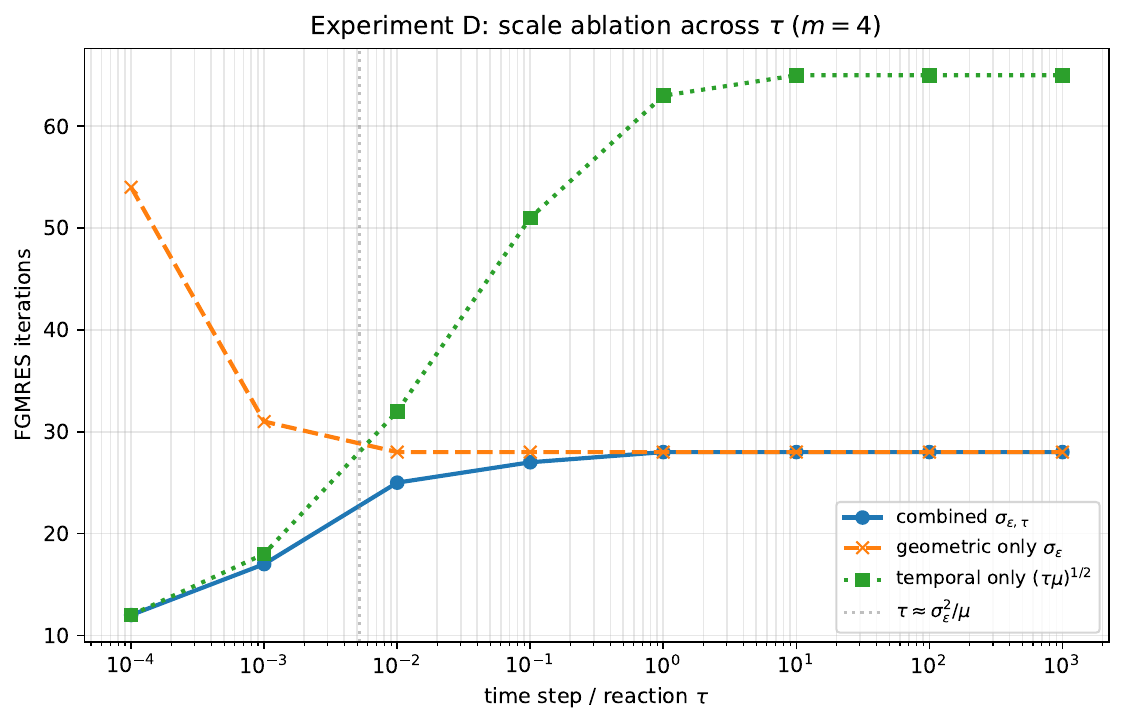}
        \caption{Scale ablation across $\tau$ at $m=4$.}
        \label{fig:expD-ablation}
    \end{subfigure}\hfill
    \begin{subfigure}{0.49\textwidth}
        \centering
        \includegraphics[width=\textwidth]{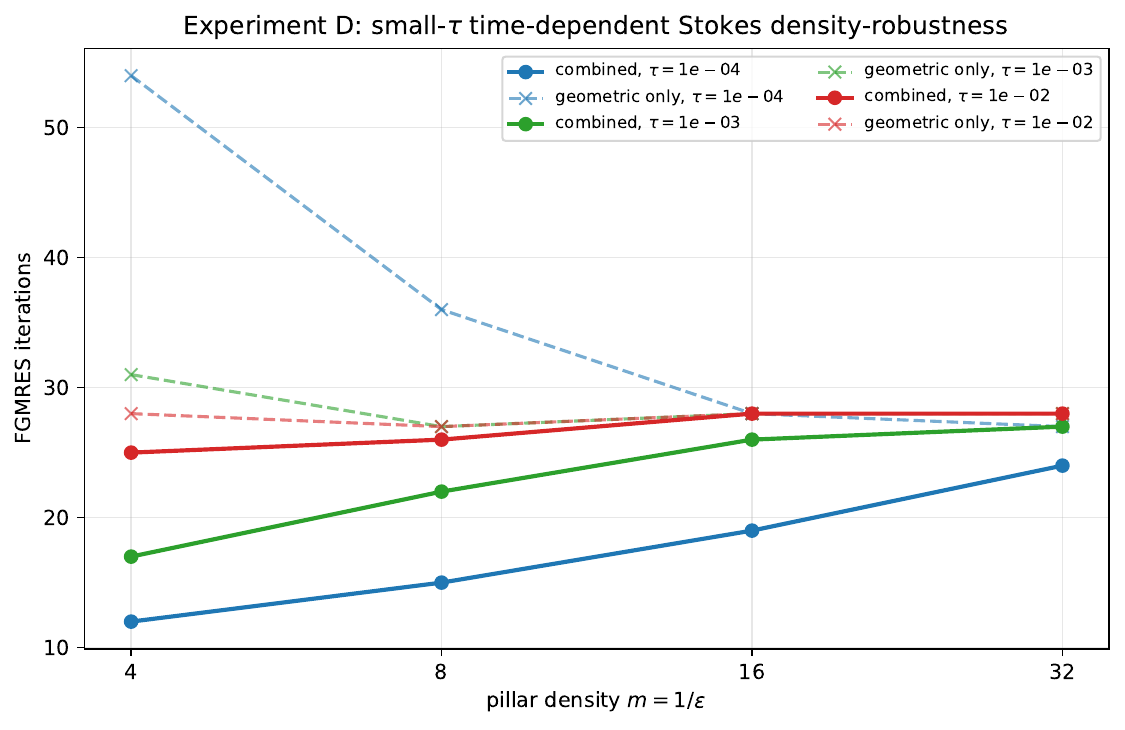}
        \caption{Density robustness at small $\tau$.}
        \label{fig:expD-density}
    \end{subfigure}
    \caption{Time-discrete Stokes validation (Section~\ref{subsec:time_discrete_num}). Left: the combined scale $\sigma_{\eps,\tau}$ remains flat across $\tau$, whereas $\sigma_\eps$ alone fails at small $\tau$ and the time-only scale $(\tau\mu)^{1/2}$ fails at large $\tau$. Right: at small $\tau$, the combined scale is nearly independent of pillar density $m$, while the $\sigma_\eps$-only control degrades.}
    \label{fig:expD2}
\end{figure}

\begin{table}[t]
    \centering
    \caption{Small-$\tau$ density robustness (Section~\ref{subsec:time_discrete_num}): FGMRES iterations versus pillar density $m$, grouped by pressure-block scale.}
    \label{tab:expD-density}
    \begin{subtable}{\textwidth}
        \centering
        \caption{Combined scale $\sigma_{\eps,\tau}$.}
        \input{latex/exp_D_density_combined.tex}
    \end{subtable}
    \medskip
    \begin{subtable}{\textwidth}
        \centering
        \caption{Time-only scale $(\tau\mu)^{1/2}$.}
        \input{latex/exp_D_density_time_only.tex}
    \end{subtable}
    \medskip
    \begin{subtable}{\textwidth}
        \centering
        \caption{Geometric scale $\sigma_\eps$.}
        \input{latex/exp_D_density_geometric.tex}
    \end{subtable}
\end{table}

\subsection{Application: tilted DLD arrays}\label{subsec:dld}
The motivating application throughout this paper is deterministic lateral displacement (DLD), in which the pillar array is tilted by a small row-shift. To confirm that the preconditioner performs on the genuine application geometry---not only on the regular ($\delta=0$) arrays of the preceding experiments---we generate row-shifted arrays in which the centers of row $i$ are displaced by $\delta\,i\,\mathrm{pitch}$, with periodic cropping and inflow/outflow buffer bands keeping the walls clean. We sweep the row-shift fraction $\delta\in\{0,\tfrac1{10},\tfrac15,\tfrac13\}$ at $\kappa\in\{0.25,0.40\}$ and $m\in\{8,16\}$, solving with \texttt{invsum}. Figure~\ref{fig:expE} shows the resulting tilted-lattice flow field, and Table~\ref{tab:expE} reports the iteration counts: the tilted arrays converge in counts comparable to the regular array of the same size ($\kappa=0.25$: $29$ at $\delta=0$ versus $28$--$34$ for $\delta\ne0$; $\kappa=0.40$: $39$--$40$ versus $48$--$61$), all well within the iteration budget. The solver is therefore robust on the target DLD geometry.

\begin{figure}[t]
    \centering
    \begin{subfigure}{0.58\textwidth}
        \centering
        \includegraphics[width=\textwidth]{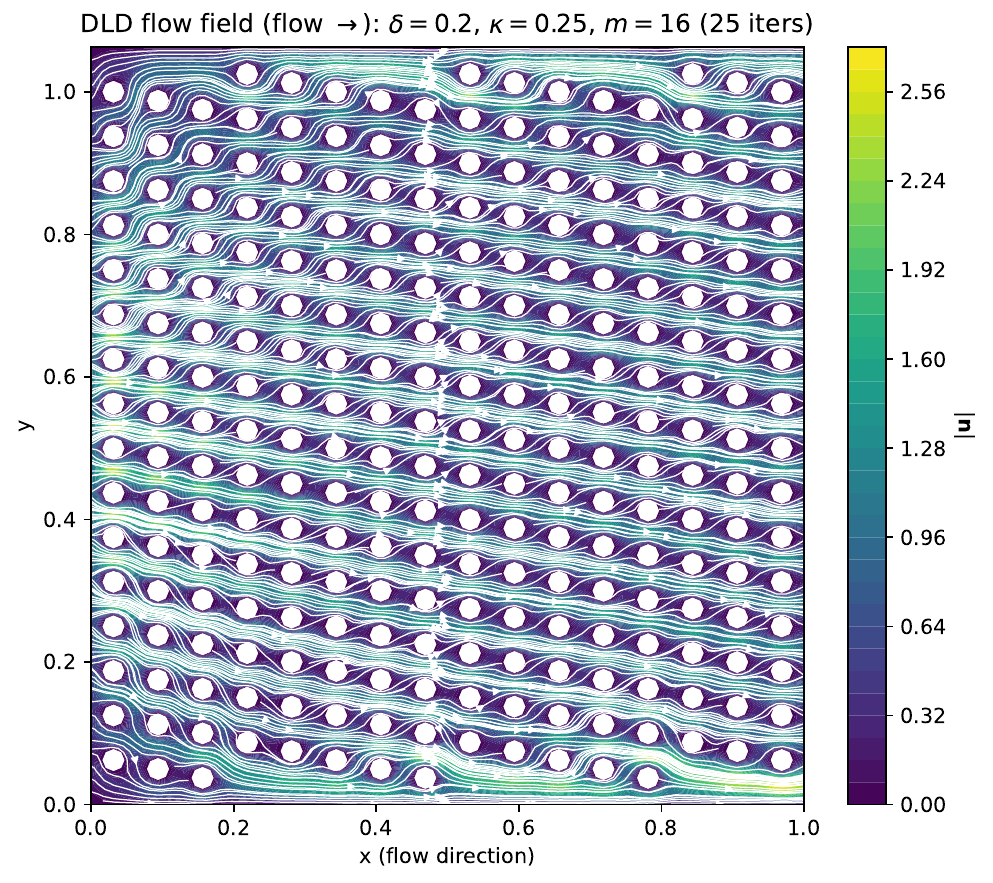}
        \caption{Velocity magnitude and streamlines, $\delta=\tfrac15$, $\kappa=0.25$, $m=16$.}
        \label{fig:expE-field}
    \end{subfigure}\hfill
    \begin{subfigure}{0.40\textwidth}
        \centering
        \includegraphics[width=\textwidth]{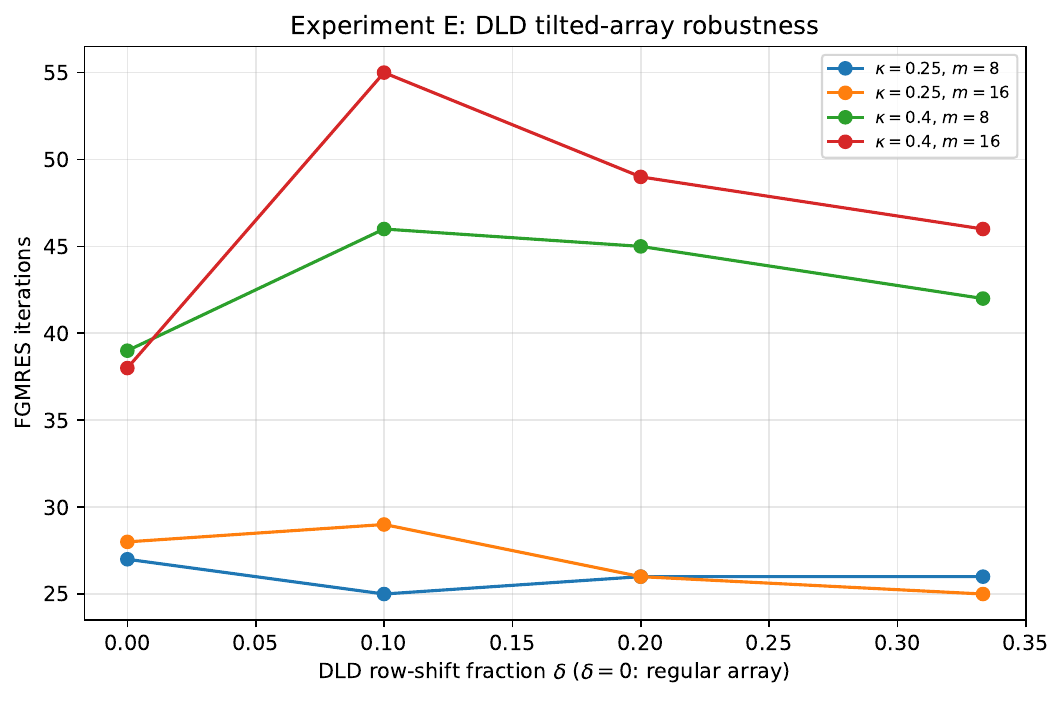}
        \caption{FGMRES iterations versus row-shift $\delta$.}
        \label{fig:expE-iters}
    \end{subfigure}
    \caption{Tilted DLD arrays (Section~\ref{subsec:dld}). The streamlines traverse the row-shifted lattice, and the iteration count on the tilted arrays stays comparable to the regular ($\delta=0$) array of the same size.}
    \label{fig:expE}
\end{figure}

\begin{table}[t]
    \centering
    \caption{FGMRES iterations versus DLD row-shift fraction $\delta$ (Section~\ref{subsec:dld}); $\delta=0$ is the regular array.}
    \label{tab:expE}
    \input{latex/exp_E_dld.tex}
\end{table}

\subsection{The close-packing failure boundary}\label{subsec:failure}
The one geometric quantity the theory does not absorb is the restriction-operator constant $C_R$, which blows up as $\kappa\uparrow\frac12$ (the close-packing limit). We characterize this boundary quantitatively by scanning $\kappa\in\{0.40,0.45,0.47,0.48,0.49\}$ at $m\in\{8,16\}$ with the correct throat scale. Figure~\ref{fig:expF} and Table~\ref{tab:expF} show a super-linear rise in the iteration count toward close-packing (at $m=8$: $35,57,86,121,233$). The count depends only on the relative gap $1-2\kappa$ and not on the pillar density: at each $\kappa$ the $m=8$ and $m=16$ values nearly coincide (e.g.\ $233$ versus $236$ at $\kappa=0.49$), so the two curves of Figure~\ref{fig:expF} overlap. The blow-up follows a power law $N\sim(1-2\kappa)^{-\gamma}$ with exponent $\gamma\approx0.81$, essentially independent of $m$ ($0.82$ at $m=8$ versus $0.81$ at $m=16$). The failure threshold---where the count first exceeds twice the $\kappa=0.40$ baseline---is $\kappa^*\approx0.47$ and does not move with $m$. Crucially, even at $\kappa=0.49$ (throat width $0.00125$ at $m=16$, reaching $3.1\times10^6$ DoFs) the solver still converges ($236$ iterations): the close-packing regime is a quantifiable pre-asymptotic cost, not a hard barrier.

\begin{figure}[t]
    \centering
    \includegraphics[width=\textwidth]{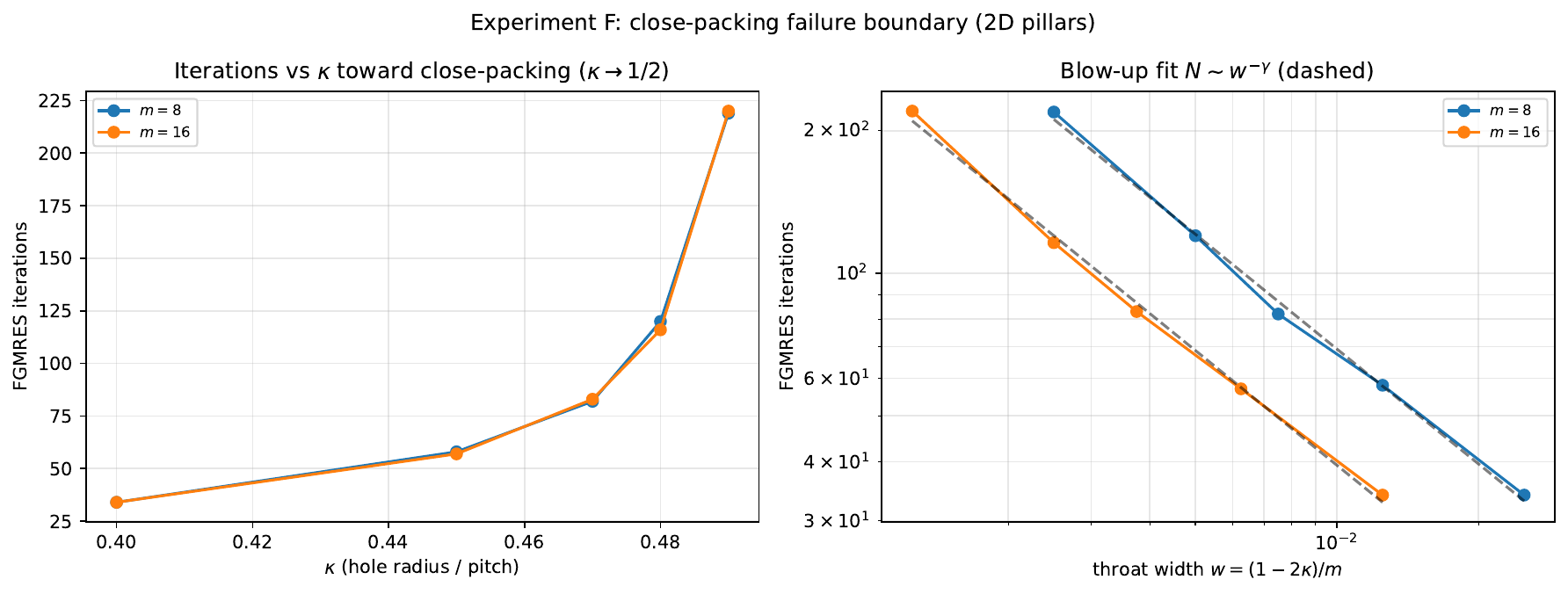}
    \caption{Close-packing failure boundary (Section~\ref{subsec:failure}), for $m=8$ and $m=16$. Left: FGMRES iterations versus $\kappa$ toward $\kappa\to\tfrac12$; the two curves nearly coincide, so the failure threshold is independent of the pillar density $m$. Right: the same data versus the relative gap $1-2\kappa$ (log--log); the two curves collapse onto a single line and the dashed power-law fit $N\sim(1-2\kappa)^{-\gamma}$ has exponent $\gamma\approx0.81$.}
    \label{fig:expF}
\end{figure}

\begin{table}[t]
    \centering
    \caption{FGMRES iterations toward close-packing (Section~\ref{subsec:failure}). The $m=8$ and $m=16$ counts nearly coincide at each $\kappa$, so the blow-up depends only on the relative gap $1-2\kappa$, with fitted exponent $\gamma\approx0.82$ ($m=8$) and $0.81$ ($m=16$). The bottom row is the power law $(1-2\kappa)^{-0.81}$ anchored at $\kappa=0.40$, for reference. All cases converge.}
    \label{tab:expF}
    \input{latex/exp_F_failure.tex}
\end{table}

\subsection{Summary}\label{subsec:numerical_summary}

The two-dimensional experiments confirm the structural predictions of the theory one by one: the iteration count is flat in the pillar density $m$ over a $16\times$ range, and both summands of the Riesz map \eqref{eq:MK_inverse} are individually necessary while their parallel combination---unlike the operator-sum of \cite{Meier2022} at the same weight---delivers $m$-robustness (Section~\ref{subsec:m_robust}); the spectral diagnostics trace this to the compression of an $\Theta(m^2)$ Schur interval into an $m$-independent one (Section~\ref{subsec:spectrum}); the combined scale $\sigma_{\eps,\tau}$ is $\tau$-uniform across the time-discrete transition (Section~\ref{subsec:time_discrete_num}); and the solver is robust on genuine tilted DLD arrays (Section~\ref{subsec:dld}). The only substantial residual dependence is on the pillar radius near close-packing, where the iteration count blows up as $N\sim(1-2\kappa)^{-0.81}$ past $\kappa^*\approx0.47$ but still converges (Section~\ref{subsec:failure}), tracing this to the restriction-operator constant $C_R$. Closing the residual close-packing gap via spatially varying weights that track the local pore aperture is a natural direction for future work.

\section{Conclusion}\label{sec:conclusion}

We have proved that the inf-sup norm $\|\cdot\|_{*,\Omega_m}$ and the $K$-functional norm $\|\cdot\|_{L^2+\sigma_\eps H^1}$ are equivalent on $L_0^2(\Omega_m)$ with constants independent of the periodicity parameter $\epsilon$, on periodic pillar arrays in the proportional-hole regime $a_\eps\sim\eps$. The upper bound follows from the Poincar\'{e} inequality on the perforated domain. The lower bound is established at the PDE level via a dual pressure extension: the restriction operator lifts pressures from $\Omega_m$ to $\Omega$ by duality, a fixed-domain weighted Ne\v{c}as inequality at the $K$-functional scale converts the duality bound into a norm estimate on $\Omega$, and restriction yields the competitor on $\Omega_m$. The argument is mesh-free and bypasses the coarse-partition/quasi-interpolation machinery of \cite{sande2025robust}, which is intrinsic to thin channels and does not transport to a pillar network. Corollary~\ref{cor:discrete_equiv} transports the equivalence to any inf-sup stable finite-element pair admitting an $\eps$-uniform Fortin operator, with constants independent of $\eps$, $m$, and the mesh size $h$. The result lifts the one-sided pressure estimate of \cite{Allaire1991,Lu2020} from a single Stokes pressure to a two-sided norm equivalence valid for arbitrary $q\in L_0^2(\Omega_m)$, and is consistent with the sharp $\Theta(m^{-1})$ degradation of the LBB constant established in \cite{infsup_perforation}.

The equivalence supplies the missing pressure norm for the Mardal--Winther operator-preconditioning framework on perforated geometries. The $K$-functional Riesz map admits the closed form $M_K^{-1}=M_p^{-1}+\sigma_\eps^{-2}L_p^{-1}$ (Section~\ref{subsec:riesz_derivation}), which identifies the perforated Stokes problem with the Brinkman problem at homogenized permeability $\sigma_\eps^2\asymp\eps^2$ and yields a Brinkman-type pressure block --- one $L^2$ AMG solve on $M_p$ plus one scaled Laplacian AMG solve on $L_p$ --- as a robust surrogate for the perforated Stokes Schur complement. Assembled into an upper block-triangular preconditioner $\mathcal P_U$, this gives an algorithm whose per-iteration cost is three AMG V-cycles and is linear in DoFs.

The numerical study confirms the robustness properties predicted by the norm equivalence, prediction by prediction. On focused two-dimensional unit cells, FGMRES iteration counts are flat in the pillar density $m$ over a $16\times$ range, with no systematic growth tracking the $\Theta(m^{-1})$ degradation of $\beta(\Omega_m)$; an ablation shows that both summands of $M_K^{-1}$ are individually necessary and that their parallel (inverse-sum) combination, unlike the operator-sum of \cite{Meier2022} at the same geometric weight, is what yields $m$-robustness. Spectral diagnostics trace this to the compression of an $\Theta(m^2)$ Schur-complement interval into an $m$-independent one. The only substantial residual dependence is on the hole radius near close-packing ($\kappa\gtrsim0.4$), where the iteration count blows up as $N\sim(1-2\kappa)^{-0.81}$ in the relative gap $1-2\kappa$ past a threshold $\kappa^*\approx0.47$ but still converges---a quantifiable pre-asymptotic cost rather than a barrier---and is traced to the geometric blow-up of the restriction-operator constant $C_R$. Closing the residual close-packing gap via spatially varying weights tracking the local pore aperture, or an adaptive penalty in the spirit of \cite{infsup_perforation}, is a natural direction for future work. Finally, the same framework extends to backward-Euler discretizations of time-dependent Stokes flow at the combined scale $\sigma_{\eps,\tau}=(\sigma_\eps^{-2}+(\tau\mu)^{-1})^{-1/2}$ (Theorem~\ref{thm:time_discrete_stokes}), yielding a $\tau$-uniform preconditioner across the time-discrete transition that we confirm numerically in \S\ref{subsec:time_discrete_num}.

\section*{CRediT authorship contribution statement}
\textbf{Qi Xin:}  Methodology, Software, Formal analysis, Investigation, Writing -- original draft.
\textbf{Yan Xie:} Software, Validation.
\textbf{Chensong Zhang:} Methodology,  Funding acquisition.
\textbf{Shihua Gong:} Conceptualization, Formal analysis, Supervision, Writing -- review \& editing, Funding acquisition.
\textbf{Jinchao Xu:} Conceptualization.

\section*{Declaration of competing interest}
The authors declare that they have no known competing financial interests.

\section*{Acknowledgements}
The authors are grateful to Yong Lu (Department of Mathematics, Nanjing University) for valuable advice on homogenization theory.

\section*{Data availability}
No data was used for the research described in the article.

\bibliographystyle{elsarticle-num}
\bibliography{reference}

\end{document}

%% file: latex/exp_A_ablation.tex
\begin{tabular}{lrrrrr}
\hline
$\hat S^{-1}$ & $m=2$ & $m=4$ & $m=8$ & $m=16$ & $m=32$ \\
\hline
$M_p^{-1}$ & 42 & 65 & 699 & 1000$^\dagger$ & 1000$^\dagger$ \\
$\sigma_\epsilon^{-2}L_p^{-1}$ & 1000$^\dagger$ & 1000$^\dagger$ & 1000$^\dagger$ & 1000$^\dagger$ & 1000$^\dagger$ \\
$(M_p+\sigma_\epsilon^2 L_p)^{-1}$ & 62 & 73 & 96 & 141 & 520 \\
$(M_p+c_\star\sigma_\epsilon^2 L_p)^{-1}$ & 41 & 57 & 86 & 141 & 214 \\
$M_p^{-1}+\sigma_\epsilon^{-2}L_p^{-1}$ & 28 & 30 & 29 & 29 & 29 \\
\hline
\end{tabular}

%% file: latex/exp_B_spectrum.tex
\begin{tabular}{llrrrrr}
\hline
$\kappa$ & $m$ & $p$-dofs & $\beta_h$ & $\kappa(S)$ & $[\lambda_{\min},\lambda_{\max}]$ & $\kappa(M_K^{-1}S)$ \\
\hline
0.25 & 2 & 92 & 1.758e-01 & 4.546e+01 & [0.12,1.82] & 15.2 \\
0.25 & 4 & 390 & 1.222e-01 & 9.336e+01 & [0.12,1.82] & 14.9 \\
0.25 & 8 & 1630 & 6.872e-02 & 3.177e+02 & [0.12,1.87] & 15.6 \\
0.25 & 16 & 6680 & 3.573e-02 & 1.233e+03 & [0.12,1.88] & 15.6 \\
\hline
0.4 & 2 & 195 & 6.538e-02 & 3.588e+02 & [0.09,2.01] & 22.4 \\
0.4 & 4 & 672 & 4.611e-02 & 5.474e+02 & [0.09,2.04] & 23.0 \\
0.4 & 8 & 2561 & 2.592e-02 & 1.550e+03 & [0.09,2.05] & 23.2 \\
0.4 & 16 & 10000 & 1.346e-02 & 5.875e+03 & [0.09,2.06] & 23.2 \\
\hline
\end{tabular}

%% file: latex/exp_D_density_combined.tex
\begin{tabular}{lrrrr}
\hline
$\tau$ & $m=4$ & $m=8$ & $m=16$ & $m=32$ \\
\hline
$1e-04$ & 19 & 16 & 20 & 23 \\
$1e-03$ & 18 & 23 & 26 & 26 \\
$1e-02$ & 26 & 27 & 28 & 26 \\
\hline
\end{tabular}

%% file: latex/exp_D_density_time_only.tex
\begin{tabular}{lrrrr}
\hline
$\tau$ & $m=4$ & $m=8$ & $m=16$ & $m=32$ \\
\hline
$1e-04$ & 15 & 14 & 20 & 28 \\
$1e-03$ & 16 & 25 & 35 & 66 \\
$1e-02$ & 31 & 48 & 938 & 1332 \\
\hline
\end{tabular}

%% file: latex/exp_D_density_geometric.tex
\begin{tabular}{lrrrr}
\hline
$\tau$ & $m=4$ & $m=8$ & $m=16$ & $m=32$ \\
\hline
$1e-04$ & 57 & 35 & 29 & 26 \\
$1e-03$ & 30 & 27 & 27 & 26 \\
$1e-02$ & 29 & 28 & 28 & 26 \\
\hline
\end{tabular}

%% file: latex/exp_E_dld.tex
\begin{tabular}{llrrrr}
\hline
$\kappa$ & $m$ & $\delta=0$ & $\delta=0.1$ & $\delta=0.2$ & $\delta=0.333$ \\
\hline
0.25 & 8 & 29 & 29 & 28 & 28 \\
0.25 & 16 & 29 & 34 & 31 & 30 \\
0.4 & 8 & 40 & 50 & 50 & 48 \\
0.4 & 16 & 39 & 61 & 56 & 55 \\
\hline
\end{tabular}

%% file: latex/exp_F_failure.tex
\begin{tabular}{cccccc}
\hline
$\kappa$ & 0.4 & 0.45 & 0.47 & 0.48 & 0.49 \\
\hline
$m=8$ & 35 & 57 & 86 & 121 & 233 \\
$m=16$ & 36 & 59 & 86 & 122 & 236 \\
\hline
$(1-2\kappa)^{-0.81}$ & 35 & 61 & 93 & 129 & 226 \\
\hline
\end{tabular}